\newtheoremstyle{thm}
  {9pt}{9pt}{\itshape}{}{\bfseries}{}{.5em}{}
\theoremstyle{thm}
\newtheorem{thm}{Theorem}
\newtheorem{lemma}[thm]{Lemma}
\newtheorem{prop}[thm]{Proposition}
\newtheorem*{conj}{Conjecture}
\newtheoremstyle{defin}
  {9pt}{9pt}{}{}{\bfseries}{}{.5em}{}
\theoremstyle{defin}
\newtheoremstyle{exm}
  {9pt}{9pt}{}{}{\scshape}{}{.5em}{}
\theoremstyle{exm}
\newtheorem*{exm}{Example}
\newtheoremstyle{proof}
  {}{}{}{}{\itshape}{:}{.5em}{}
\theoremstyle{proof}
\newtheorem*{skt}{Sketch of proof}
\newcommand{\set}[1]{\{#1\}}
\newcommand{\Z}{{\mathbb Z}}
\newcommand{\p}[1]{\mathcal{#1}}
\newcommand{\s}[1]{\mathbf{#1}}
\newcommand{\pH}{\overline{\mathcal H}}
\author{Matja\v z Konvalinka$^*$}
\title[Weighted hook length formula III]{The weighted hook length formula III:\\Shifted tableaux}
\thanks{${\hspace{-.95ex}}^*$Department of Mathematics,
Vanderbilt University, Nashville;~\texttt{matjaz.konvalinka@vanderbilt.edu}}
\begin{document}

\begin{abstract}
 Recently, a simple proof of the hook length formula was given via the branching rule. In this paper, we extend the results to shifted tableaux. We give a bijective proof of the branching rule for the hook lengths for shifted tableaux; present variants of this rule, including weighted versions; and make the first tentative steps toward a bijective proof of the hook length formula for $d$-complete posets.
\end{abstract}

\maketitle

\section{Introduction}

Let $\lambda = (\lambda_1,\lambda_2,\ldots,\lambda_\ell)$, $\lambda_1 \geq \lambda_2 \geq \ldots \geq \lambda_\ell > 0$, be a partition of $n$, $\lambda \vdash n$, and let $[\lambda] = \set{(i,j) \in \Z^2 \colon 1 \leq i \leq \ell, 1 \leq j \leq \lambda_i}$ be the corresponding \emph{Young diagram}. The \emph{conjugate partition} $\lambda'$ is defined by $\lambda_j' = \max \{i : \lambda_i \geq j\}$. The \emph{hook} $H_{\s z} = H_{ij} \subseteq [\lambda]$ is the set of squares weakly to the right and below of $\s z = (i,j) \in [\lambda]$, and the {\it hook length} $h_{\s z} = h_{ij} = |H_{\s z}|= \lambda_i +\lambda'_j - i - j +1$ is the size of the hook. The \emph{punctured hook} $\overline H_{\s z} \subseteq [\lambda]$ is the set $H_{\s z} \setminus \set{\s z}$. See Figure \ref{fig1}, left and center drawing.

\medskip

We write $[n]$ for $\set{1,\ldots,n}$, $\p P(A)$ for the power set of $A$, and $A \sqcup B$ for the disjoint union of (not necessarily disjoint) sets $A$ and $B$. Furthermore, for squares $(i,j)$ and $(i',j')$ of $[\lambda]$, write $(i,j) \geq (i',j')$ if $i \leq i'$ and $j \leq j'$.

\medskip

A \emph{standard Young tableau} of shape $\lambda$ is a bijective map $f: [\lambda] \to [n]$, such that $f(\s z) < f(\s {z'})$ whenever $\s z \geq \s {z'}$ and $\s z \neq \s {z'}$. See Figure \ref{fig1}, right drawing. We denote the number of standard Young tableaux of shape $\lambda$  by $f_\lambda$. The remarkable hook length formula states that if $\lambda$ is a partition of $n$, then
$$f_\lambda = \frac{n!}{\prod_{\s z \in [\lambda]} h_{\s z}}.$$

For example, for $\lambda=(3,2,2)\vdash 7$, the hook length formula gives
$$f_{322} \, = \, \frac{7!}{5\cdot 4 \cdot 1 \cdot 3\cdot 2 \cdot 2 \cdot 1} \, = \, 21.$$

\begin{figure}[hbt]
\psfrag{1}{$1$}
\psfrag{2}{$2$}
\psfrag{3}{$3$}
\psfrag{4}{$4$}
\psfrag{5}{$5$}
\psfrag{6}{$6$}
\psfrag{7}{$7$}
\psfrag{L}{$\lambda$}
\begin{center}
\epsfig{file=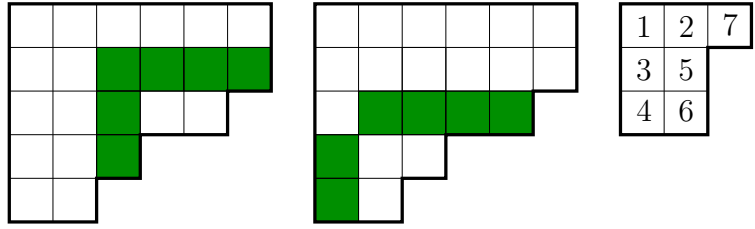,height=3cm}
\end{center}
\caption{Young diagram $[\lambda]$,  $\lambda = 66532$, hook $H_{23}$ with hook length $h_{23}=6$ (left), punctured hook $\overline H_{31}$ (center); a standard Young tableau of shape $322$ (right).}
\label{fig1}
\end{figure}

This gives a short formula for dimensions of irreducible representations of the symmetric group, and is a fundamental result in algebraic combinatorics.  The formula was discovered by Frame, Robinson and Thrall in~\cite{FRT} based on earlier resultsof Young, Frobenius and Thrall.  Since then, it has been reproved, generalized and extended in several different ways, and applications have been found in a number of fields ranging from algebraic geometry to probability, and from group theory to the analysis of algorithms.

\medskip

One way to prove the hook length formula is by induction on $n$. Namely, it is obvious that in a standard Young tableau, $n$ must be in one of the \emph{corners}, squares $(i,j)$ of $[\lambda]$ satisfying $(i+1,j),(i,j+1) \notin [\lambda]$. Therefore
$$f_\lambda  =  \sum_{\s c \in \p C[\lambda]}  f_{\lambda-\s c},$$
where $\p C[\lambda]$ is the set of all corners of $\lambda$, and $\lambda-\s c$ is the partition whose diagram is $[\lambda] \setminus \set{\s c}$.

\medskip

That means that in order to prove the hook length formula, we have to prove that $F_\lambda = n!/\prod h_{\s z}$ satisfy the same recursion. It is easy to see that this is equivalent to the following \emph{branching rule for the hook lengths}:
\begin{equation} \label{branch}
  n \cdot \!\!\!\prod_{(i,j) \in [\lambda] \setminus \p C[\lambda]} (h_{ij} - 1)\, = \, \sum_{(r,s) \in \p C[\lambda]}\left[\prod_{\stackrel{(i,j) \in [\lambda]\setminus \p C[\lambda]}{\scriptscriptstyle i \neq r,j \neq s}} (h_{ij} - 1)\right] \prod_{i=1}^{r-1} h_{is} \prod_{j=1}^{s-1} h_{rj}.
\end{equation}

In \cite{ckp}, a weighted generalization of this formula was presented, with a simple bijective proof.

\medskip

Suppose that $\lambda = (\lambda_1,\lambda_2,\ldots,\lambda_\ell)$, $\lambda_1 > \lambda_2 > \ldots > \lambda_\ell > 0$, is a partition of $n$ \emph{with distinct parts}, and let $[\lambda]^* = \set{(i,j) \in \Z^2 \colon 1 \leq i \leq \ell, i \leq j \leq \lambda_i+i-1}$ be the corresponding \emph{shifted Young diagram}. The \emph{hook} $H_{\s z}^* = H_{ij}^* \subseteq [\lambda]^*$ is the set of squares weakly to the right and below of $\s z = (i,j) \in [\lambda]$, and in row $j+1$, and the {\it hook length} $h_{\s z}^* = h_{ij}^* = |H_{\s z}^*|$ is the size of the hook. It is left as an exercise for the reader to check that $h_{ij}^* = \lambda_i + \lambda_{j+1}$ if $j < \ell(\lambda)$ and $h_{ij}^* = \lambda_i + \max\set{k \colon \lambda_k \geq j + 1 - k \geq 1} - j$ if $j \geq \ell(\lambda)$. The \emph{punctured hook} $\overline H_{\s z}^* \subseteq [\lambda]^*$ is the set $H^*_{\s z} \setminus \set{\s z}$. See Figure \ref{fig2}, left and center drawing. For squares $(i,j)$ and $(i',j')$ of $[\lambda]^*$, write $(i,j) \geq (i',j')$ if $i \leq i'$ and $j \leq j'$.

\medskip

A \emph{standard shifted Young tableau} of shape $\lambda$ is a bijective map $f: [\lambda]^* \to [n]$, such that $f(\s z) < f(\s {z'})$ whenever $\s z \geq \s {z'}$ and $\s z \neq \s {z'}$. See Figure \ref{fig2}, right drawing. We denote the number of standard shifted Young tableaux of shape $\lambda$  by $f^*_\lambda$. The shifted hook length formula states that if $\lambda$ is a partition of $n$ with distinct parts, then
$$f^*_\lambda = \frac{n!}{\prod_{\s z \in [\lambda]} h_{\s z}^*}.$$

For example, for $\lambda=(5,3,2)\vdash 10$, the hook length formula gives
$$f^*_{532} \, = \, \frac{10!}{8\cdot 7 \cdot 5 \cdot 4\cdot 1 \cdot 5 \cdot 3\cdot 2\cdot 2\cdot 1} \, = \, 54.$$

\begin{figure}[hbt]
\psfrag{1}{$1$}
\psfrag{2}{$2$}
\psfrag{3}{$3$}
\psfrag{4}{$4$}
\psfrag{5}{$5$}
\psfrag{6}{$6$}
\psfrag{7}{$7$}
\psfrag{8}{$8$}
\psfrag{9}{$9$}
\psfrag{10}{$10$}
\psfrag{L}{$\lambda$}
\begin{center}
\epsfig{file=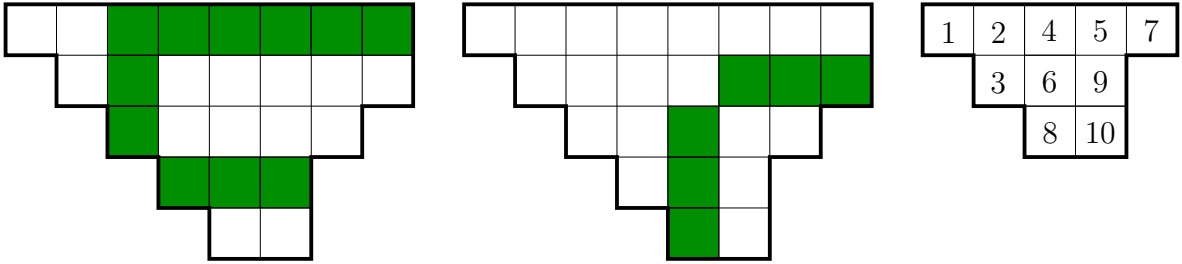,height=3.5cm}
\end{center}
\caption{Shifted Young diagram $[\lambda]$, $\lambda = 87532$, a hook $H_{13}^*$ with hook length $h_{13}^*=11$ (left), a punctured hook $\overline H_{25}^*$ (center); a standard shifted Young tableau of shape $532$ (right).}
\label{fig2}
\end{figure}

Again, one way to prove the shifted hook length formula is by induction on $n$. Namely, it is obvious that in a standard shifted Young tableau, $n$ must be in one of the \emph{shifted corners}, squares $(i,j)$ of $[\lambda]^*$ satisfying $(i+1,j),(i,j+1) \notin [\lambda]^*$. Therefore
$$f^*_\lambda  =  \sum_{\s c \in \p C^*[\lambda]}  f^*_{\lambda-\s c},$$
where $\p C^*[\lambda]$ is the set of all shifted corners of $\lambda$, and $\lambda-\s c$ is the partition whose shifted diagram is $[\lambda]^* \setminus \set{\s c}$.

\medskip

That means that in order to prove the shifted hook length formula, we have to prove that $F^*_\lambda = n!/\prod h_{\s z}^*$ satisfy the same recursion. It is easy to see that this is equivalent to the following \emph{shifted branching rule for the hook lengths}:
\begin{equation} \label{sh-branch}
  n \cdot \!\!\!\!\!\prod_{(i,j) \in [\lambda]^* \setminus \p C^*[\lambda]} (h^*_{ij} - 1)\, = \, \sum_{(r,s) \in \p C^*[\lambda]}\left[\prod_{\stackrel{(i,j) \in [\lambda]\setminus \p C[\lambda]}{\scriptscriptstyle i \neq r,j \neq r-1,s}} (h^*_{ij} - 1)\right] \prod_{i=1}^{r-1} h^*_{is} \prod_{j=1}^{s-1} h^*_{rj} \prod_{i=1}^{r-1} h^*_{i,r-1}.
\end{equation}

Interestingly, the shifted hook length formula was discovered before the non-shifted one \cite{thr}. Shortly after the famous Greene-Nijenhuis-Wilf's probabilistic proof of the ordinary hook length formula \cite{gnw}, Sagan \cite{sagan} extended the argument to the shifted case. The proof, however, is rather technical. In 1995, Krattenthaler \cite{Kra1} provided a bijective proof. While short, it is very involved, as it needs a variant of Hillman-Grassl algorithm, a bijection that comes from Stanley's $(P,\omega)$-partition theory, and the involution principle of Garsia and Milne. A few years later, Fischer \cite{fischer} gave the most direct proof of the formula, in the spirit of Novelli-Pak-Stoyanovskii's bijective proof of the ordinary hook-length formula. At almost 50 pages in length, the proof is very involved. Bandlow \cite{Ban} gave a short proof via interpolation.

\medskip

The main result of this paper is a bijective proof of the branching formula for the hook lengths for shifted tableaux, which is trivially equivalent to the hook length formula. While the proof still cannot be described as simple, it seems that it is the most intuitive of the known proofs. Once the proof in the non-shifted case is understood in a proper context, the bijection for shifted tableaux has only two (main) extra features: a description of a certain map $s$, described by (S\ref{s1})--(S\ref{s5}) on page \pageref{s1}, and the process of ``flipping the snake'' of Lemma \ref{flip} with a half-page proof.

\medskip

This paper is organized as follows. Section \ref{non-shifted} presents the non-weighted bijection from \cite{ckp} in a new way, to make it easier to adapt to the shifted case. Section \ref{basic} discusses the basic features of the bijective proof of \eqref{sh-branch}, and Section \ref{details} gives all the requisite details. Sections \ref{basic} and \ref{details} thus give the first completely bijective proof of the shifted branching rule. In Section \ref{comparison}, we discuss the above-mentioned paper by Sagan \cite{sagan}. Section \ref{variants} gives variants of the formulas in the spirit of \cite[\S 2]{ckp}, including weighted formulas. In Section \ref{d-complete}, we make the first step to extending the bijective proof to $d$-complete posets. We conclude with final remarks in Section \ref{final}. 

\section{Bijection in the non-shifted case} \label{non-shifted}

In this section, we essentially describe the bijection that was used in \cite{ckp} to prove the weighted branching rule for hook lengths (see also \cite{zei}). The reader is advised to read Subsection 2.2 of that paper first, however, as the description that follows is much more abstract. This way, it is more easily adaptable to the shifted case.

\medskip

Let us give an interpretation for each side of equality \eqref{branch}. The left-hand side counts all pairs $(\s s, F)$, where $\s s \in [\lambda]$ and $F$ is a map from $[\lambda] \setminus \p C[\lambda]$ to $[\lambda]$ so that $F(\s z)$ is in the punctured hook of $\s z$ for every $\s z \in [\lambda] \setminus \p C[\lambda]$. We also write $F_{\s z} = F_{i,j}$ for $F(\s z)$ when $\s z = (i,j)$. We think of the map $F$ as an arrangement of labels; we label the square $(i,j)$ by $F_{i,j} = (k,l)$ or sometimes just $kl$. The left drawing in Figure \ref{fig3} shows an example of such a pair $(\s s, F)$; the square $\s s$ is drawn in green. Denote the set of all such pairs by $\p F$. 

\begin{figure}[hbt]
\psfrag{f1}{$31$}

\psfrag{f2}{$32$}

\psfrag{f3}{$16$}

\psfrag{f4}{$34$}

\psfrag{f5}{$16$}

\psfrag{f6}{$26$}

\psfrag{f7}{$25$}

\psfrag{f8}{$23$}

\psfrag{f9}{$33$}

\psfrag{f10}{$34$}

\psfrag{f11}{$26$}

\psfrag{f12}{$\phantom{11}$}

\psfrag{f13}{$34$}

\psfrag{f14}{$34$}

\psfrag{f15}{$34$}

\psfrag{f16}{$35$}

\psfrag{f17}{$\phantom{11}$}

\psfrag{f18}{$51$}

\psfrag{f19}{$52$}

\psfrag{f20}{$\phantom{11}$}

\psfrag{f21}{$52$}

\psfrag{f22}{$\phantom{11}$}

\psfrag{f1g}{$21$}

\psfrag{f2g}{$13$}

\psfrag{f3g}{$33$}

\psfrag{f4g}{$34$}

\psfrag{f5g}{$15$}

\psfrag{f6g}{$26$}

\psfrag{f7g}{$24$}

\psfrag{f8g}{$25$}

\psfrag{f9g}{$33$}

\psfrag{f10g}{$26$}

\psfrag{f11g}{$35$}

\psfrag{f12g}{$\phantom{11}$}

\psfrag{f13g}{$31$}

\psfrag{f14g}{$32$}

\psfrag{f15g}{$35$}

\psfrag{f16g}{$35$}

\psfrag{f17g}{$\phantom{11}$}

\psfrag{f18g}{$43$}

\psfrag{f19g}{$52$}

\psfrag{f20g}{$\phantom{11}$}

\psfrag{f21g}{$52$}

\psfrag{f22g}{$\phantom{11}$}
\psfrag{L}{$\lambda$}
\begin{center}
\epsfig{file=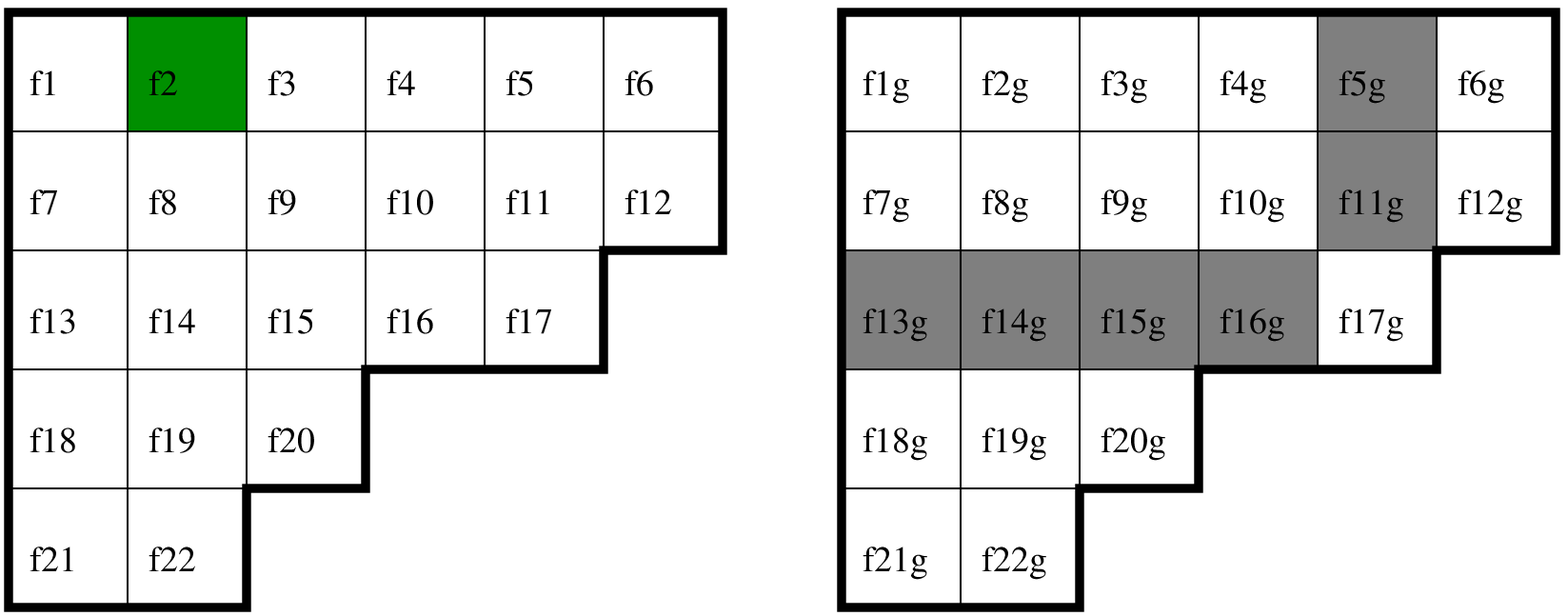,height=3.8cm}
\end{center}
\caption{A representative of the left-hand side and the right-hand side of \eqref{branch} for $\lambda = 66532$.}
\label{fig3}
\end{figure}

\medskip

Similarly, the right-hand side of \eqref{branch} counts the number of pairs $(\s c, G)$, where $\s c \in \p C[\lambda]$ and $G$ is a map from $[\lambda] \setminus \p C[\lambda]$ to $[\lambda]$ satisfying the following:
\begin{itemize}
 \item if $\s c$ lies in the hook of $\s z$, then $G(\s z)$ lies in the hook of $\s z$;
 \item otherwise, $G(\s z)$ lies in the punctured hook of $\s z$.
\end{itemize}
Note that if $\s c = (r,s)$, then $\s c$ lies in the hook of $\s z = (i,j)$ if and only if $i = r$ or $j = s$. We think of the map $G$ as an arrangement of labels. The right drawing in Figure \ref{fig3} shows an example of such a pair $(\s c, G)$; the row $r$ and the column $s$ without the corner $(r,s)$ are shaded. Denote the set of all $G$ such that $(\s c,G)$ satisfies the above properties by $\p G_{\s c}$, and the disjoint union $\bigsqcup_{\s c} \p G_{\s c}$ by $\p G$. We also write $G_{\s z} = G_{i,j}$ for $G(\s z)$ when $\s z = (i,j)$. 

\medskip

Some squares $\s z$ in the shaded row and column may satisfy $G_{\s z} = \s z$. Let us define sets $A,B$ as follows:
\begin{eqnarray*}
 \aligned
 A = A(G) & = \set{i \colon G_{i,s} = (i,s)} \\
 B = B(G) & = \set{j \colon G_{r,j} = (r,j)}
 \endaligned
\end{eqnarray*}

We think of the set $A$ as representing column $s$ without the corner $(r,s)$, and of $B$ as representing row $r$ without the corner $(r,s)$, with a dot in all squares satisfying $G_{\s z} = \s z$. Figure \ref{fig4} shows the shaded row and column for the example from the right-hand side of Figure \ref{fig3}.

\begin{figure}[hbt]

\begin{center}
\epsfig{file=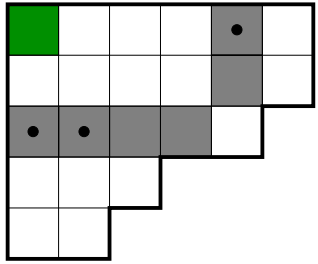,height=3.5cm}
\end{center}
\caption{Shaded row and column and sets $A$ and $B$ for the example on the right-hand side of Figure \ref{fig3}, and $s(A,B)$.}
\label{fig4}
\end{figure}

\medskip

Furthermore, denote by $\p G_{\s c}^k$ the set of all $G \in \p G_{\s c}$ for which exactly $k$ of the squares in $[\lambda] \setminus \p C[\lambda]$ satisfy $G_{\s z} = \s z$ (note that such $\s z$ must be in the shaded row or column); in other words, $|A| + |B| = k$.

\medskip

We construct two maps, $s = s_{\s c}$ and $e = e_{\s c}$. The map $s \colon \p P([r-1]) \times \p P([s-1]) \to [\lambda]$ (``starting square'') is defined by
$$s(A,B) = (\min(A \cup \set r),\min(B \cup \set s)).$$

We can extend it to a function $s \colon \p G_{\s c} \to [\lambda]$ by $s(G) = s(A(G),B(G))$. See Figure \ref{fig4}; there, $s(A,B)$ is green.

\medskip

Now let us define the map $e(G) \colon \p G_{\s c} \to \p G_{\s c}$ (``erasing a dot''). If $G \in \p G_{\s c}^0$, then $e(G) = G$. Otherwise, the arrangement $e(G)$ differs from $G$ in only one or two squares. The first case is when $s(G)$ is a shaded square; this happens if $A(G) = \emptyset$ or $B(G) = \emptyset$. If $A = A(G) = \emptyset$ and $B = B(G) = \set{j,\ldots}$, then $s(G) = (r,j)$; define $A' = A = \emptyset$, $B' = B \setminus \set j$ and $e(G)_{r,j} = s(A',B')$. If $A = A(G) = \set{i,\ldots}$ and $B = B(G) = \emptyset$, then $s(G) = (i,s)$; define $A'= A \setminus \set i$, $B' = B = \emptyset$ and $e(G)_{i,s} = s(A',B')$. In each case, we have $A(e(G)) = A'$, $B(e(G)) = B'$. See Figure \ref{fig15}.

\medskip

The second case is when $s(G) = (i,j)$ for $i < r$ and $j < s$; this happens when $A = A(G)$ and $B = B(G)$ are both non-empty. The crucial observation is that the punctured hook of $(i,j)$, which consists of squares $(i,k)$ for $j < k \leq \lambda_i$ and $(k,j)$ for $i < k \leq \lambda'_j$, is in a natural correspondence with $\overline H_{is} \sqcup \overline H_{rj}$. Indeed, squares of the form $(i,k)$ for $s < k \leq \lambda_i$ are also in the punctured hook of $(i,s)$, and squares of the form $(k,j)$ for $i < k \leq r$ are in a natural correspondence with squares of the form $(k,s)$ for $i < k \leq r$, which are in the punctured hook of $(i,s)$. On the other hand, squares of the form $(i,k)$ for $j < k \leq s$ are in a natural correspondence with squares of the form $(r,k)$ for $j < k \leq s$, which are in the punctured hook of $(r,j)$, and squares of the form $(k,j)$ for $r < k \leq \lambda'_j$ are also in the punctured hook of $(r,j)$. See Figure \ref{fig5}.

\begin{figure}[hbt]
\begin{center}
\epsfig{file=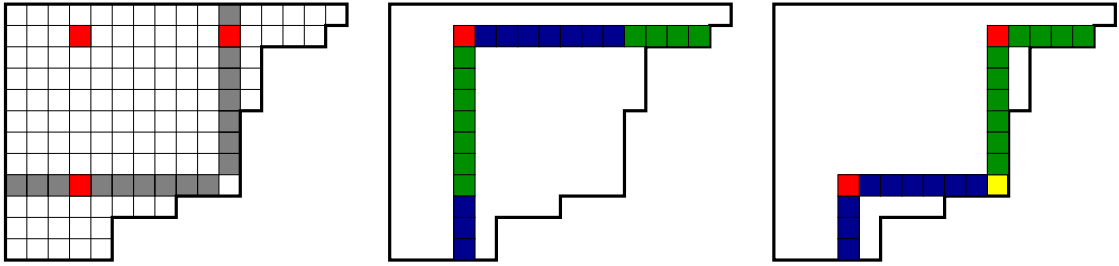,height=3.5cm}
\end{center}
\caption{The punctured hook of a square is the disjoint union of the punctured hooks of projections onto shaded row and column. The yellow square is in the punctured hook of both projections.}
\label{fig5}
\end{figure}

\medskip

That means that by slight abuse of notation, we can say that $G_{i,j} \in \overline H_{is} \sqcup \overline H_{rj}$. If $G_{i,j} \in \overline H_{is}$, define $e(G)_{i,s} = G_{i,j}$ and $e(G)_{i,j} = s(A',B')$, where $A' = A \setminus \set i$ and $B' = B$. If $G_{i,j} \in \overline H_{rj}$, define $e(G)_{r,j} = G_{i,j}$ and $e(G)_{i,j} = s(A',B')$, where $A' = A$ and $B' = B \setminus \set j$. In each case, we have $A(e(G)) = A'$, $B(e(G)) = B'$. See Figure \ref{fig6}.

\begin{figure}[hbt]

\psfrag{f1g}{$31$}

\psfrag{f2g}{$13$}

\psfrag{f3g}{$33$}

\psfrag{f4g}{$34$}

\psfrag{f5g}{$25$}

\psfrag{f6g}{$26$}

\psfrag{f7g}{$24$}

\psfrag{f8g}{$25$}

\psfrag{f9g}{$33$}

\psfrag{f10g}{$26$}

\psfrag{f11g}{$35$}

\psfrag{f12g}{$\phantom{11}$}

\psfrag{f13g}{$31$}

\psfrag{f14g}{$32$}

\psfrag{f15g}{$35$}

\psfrag{f16g}{$35$}

\psfrag{f17g}{$\phantom{11}$}

\psfrag{f18g}{$43$}

\psfrag{f19g}{$52$}

\psfrag{f20g}{$\phantom{11}$}

\psfrag{f21g}{$52$}

\psfrag{f22g}{$\phantom{11}$}
\psfrag{L}{$\lambda$}
\begin{center}
\epsfig{file=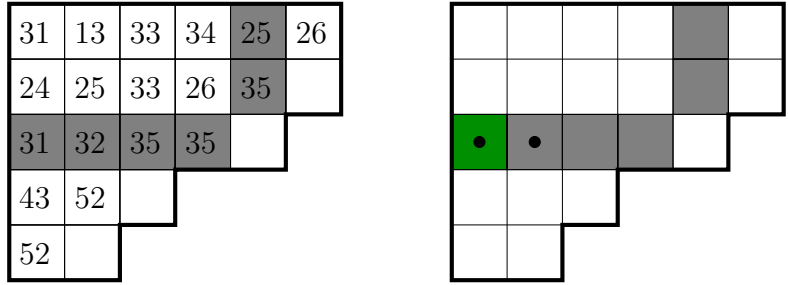,height=3.8cm}
\end{center}
\caption{Arrangement $G' = e(G)$ and corresponding $A'$, $B'$, $s(A',B')$ for $G$ from Figure \ref{fig3}.}
\label{fig6}
\end{figure}

\medskip

\begin{figure}[hbt]

\psfrag{f1g}{$31$}

\psfrag{f2g}{$13$}

\psfrag{f3g}{$33$}

\psfrag{f4g}{$34$}

\psfrag{f5g}{$25$}

\psfrag{f6g}{$26$}

\psfrag{f7g}{$24$}

\psfrag{f8g}{$25$}

\psfrag{f9g}{$33$}

\psfrag{f10g}{$26$}

\psfrag{f11g}{$35$}

\psfrag{f12g}{$\phantom{11}$}

\psfrag{f13g}{$32$}

\psfrag{f14g}{$32$}

\psfrag{f15g}{$35$}

\psfrag{f16g}{$35$}

\psfrag{f17g}{$\phantom{11}$}

\psfrag{f18g}{$43$}

\psfrag{f19g}{$52$}

\psfrag{f20g}{$\phantom{11}$}

\psfrag{f21g}{$52$}

\psfrag{f22g}{$\phantom{11}$}
\begin{center}
\epsfig{file=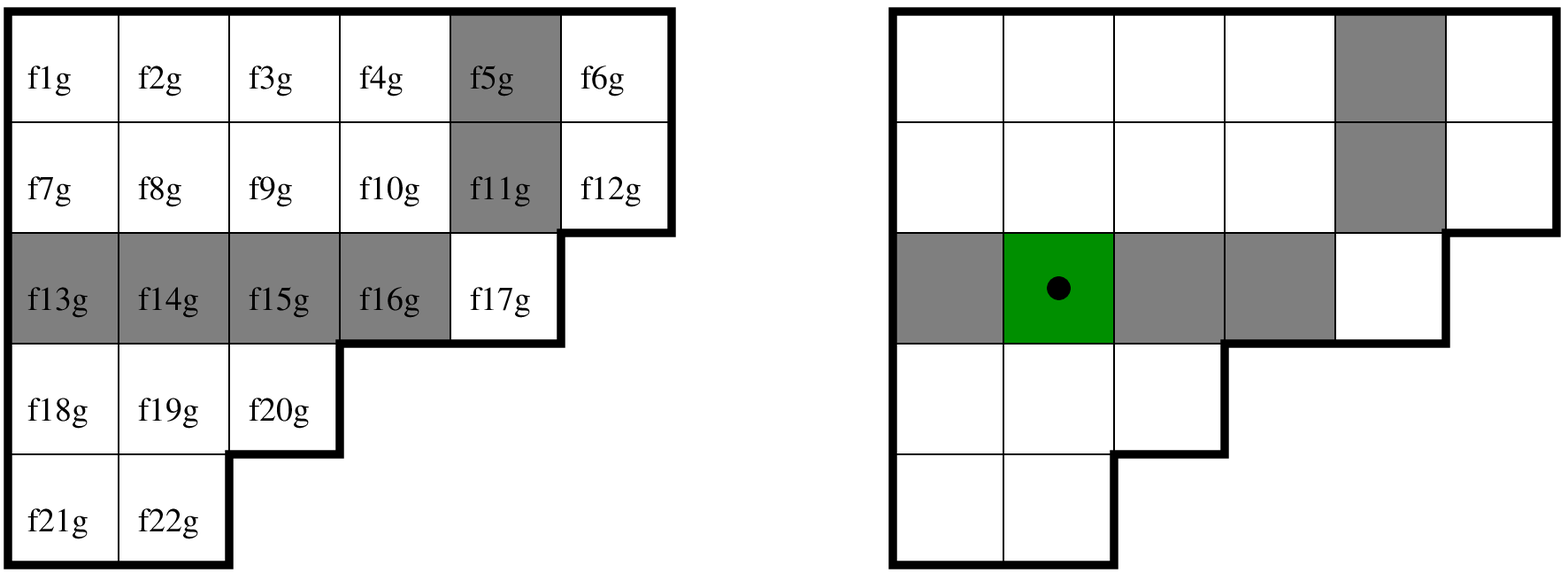,height=3.8cm}
\end{center}
\caption{Arrangement $G'' = e(G')$ and corresponding $A'$, $B'$, $s(A',B')$ for $G'$ from Figure \ref{fig6}.}
\label{fig15}
\end{figure}

\medskip

We claim that the maps $s$ and $e$ have the following three properties:
\begin{enumerate}
 \renewcommand{\labelenumi}{(P\arabic{enumi})}
 \item \label{p1} if $G \in \p G_{\s c}^0$, then $s(G) = \s c$ and $e(G) = G$, and if $G \in \p G_{\s c}^k$ for $k \geq 1$, then $e(G) \in \p G_{\s c}^{k-1}$;
 \item \label{p2} if $G \in \p G_{\s c}^k$ for $k \geq 1$, then $e(G)_{s(G)} = s(e(G))$; furthermore, if $\s z \not\leq s(G)$, then $e(G)_{\s z} = G_{\s z}$;
 \item \label{p3} given $G' \in \p G_{\s c}^k$ and $\s z$ for which $G'_{\s z} = s(G')$, there is exactly one $G \in \p G_{\s c}^{k+1}$ satisfying $s(G) = \s z$ and $e(G) = G'$.
\end{enumerate}

Indeed, the first statement of (P\ref{p1}) is clear from the definition of $s$ and the second statement is part of the definition of $e$. The third statement of (P\ref{p1}) is immediately obvious by inspection of all cases: we always leave one of the sets $A$, $B$ intact, and remove one element from the other. Part (P\ref{p2}) also follows by construction. For example, for $A = \set{i,i',\ldots}$, $B = \set{j,\ldots}$, $G_{ij} \in \overline H_{is}$, $A(e(G)) = \set{i',\ldots} = A'$, $B(e(G)) = \set{j,\ldots} = B'$ and $s(e(G)) = (i',j)$. By definition, $e(G)_{s(G)} = e(G)_{ij} = (i',j) = s(e(G))$. Also, $e(G)$ differs from $G$ only in $(i,j)$ and $(i,s)$, and so $e(G)_{\s z} = G_{\s z}$ if $\s z \not\leq s(G)$. Other cases are very similar. That leaves only (P\ref{p3}). Assume that $A(G') = \set{i',\ldots}$, $B(G') = \set{j',\ldots}$, $s(G') = (i',j')$, $\s z = (i,j')$ for $i < i'$, $G'_{i,j'} = s(G') = (i',j')$. It is easy to see that $G$ defined by $G_{i,j'} = G'_{i,s}$, $G_{i,s} = (i,s)$, $G_{\s z} = G'_{\s z}$ for $\s z \neq (i,j'),(i,s)$ satisfies $s(G) = \s z$ and $e(G) = G'$, and that this is the only $G$ with these properties. Other cases are similar.

\medskip

Observe that by (P\ref{p1}), the sequence $G,e(G),e^2(G),\ldots$ eventually becomes constant, say $\phi(G)$, for every $G$. More specifically, if $G \in \p G_{\s c}^k$, then $\phi(G) = e^k(G)$. Let us first prove a simple consequences of (P\ref{p2}). Take $G \in \p G_{\s c}^k$ for $k \geq 1$. Since $s(e(G))$ is in the punctured hook of $s(G)$, we have $s(G) \not\leq s(e(G))$ and by the last statement in (P\ref{p2}), $e^2(G)_{s(G)} = e(G)_{s(G)}=s(e(G))$. Similarly, $s(G) \not \leq s(e^2(G))$, and so $e^3(G)_{s(G)} = e^2(G)_{s(G)} = s(e(G))$. By induction, we have
\begin{equation} \label{eq1}
 \phi(G)_{s(G)} = s(e(G)).
\end{equation}

\medskip

We claim that $\Phi \colon G \mapsto (s(G),\phi(G))$ is a bijection between $\p G$ and $\p F$. To see that, note first that an element $(\s z,F)$ of $\p F$ naturally gives a ``hook walk'' $\s z \to F(\s z) \to F^2(\s z) \to F^3(\s z) \to \ldots \to \s c$ for some $\s c \in \p C[\lambda]$. For $\s c \in \p C[\lambda]$ and $k \geq 0$, denote by $\p F_{\s c}^k$ the set of all $(\s z,F)$ for which $F^k(\s z) = \s c$. We prove by induction on $k$ that given $(\s z,F) \in \p F_{\s c}^k$, $\Phi(G) = (\s z,F)$ for exactly one $G \in \p G$, and that this $G$ is an element of $\p G_{\s c}^k$.

\medskip

If $(\s z,F) \in \p F_{\s c}^0$ (i.e., if $\s z = \s c$), then $\Phi(G) = (\s c,F)$ if and only if $G = F \in \p G_{\s c}^0$. Now assume we have proved the statement for $0,\ldots,k$, and take $(\s z,F) \in \p F_{\s c}^{k+1}$. Because $(F(\s z),F) \in \p F_{\s c}^k$, we have $(F(\s z),F) = \Phi(G') = (s(G'),\phi(G'))=(s(G'),e^k(G'))$ for (exactly one) $G' \in \p G$, and we have $G' \in \p G_{\s c}^k$. Since $G'_{\s z} = e(G')_{\s z} = e^2(G')_{\s z} = \ldots = F_{\s z} = s(G')$ by an argument similar to the proof of \eqref{eq1}, there is (exactly one) $G \in \p G_{\s c}^{k+1}$ satisfying $s(G) = \s z$ and $e(G) = G'$, by (P\ref{p3}). Obviously, $\Phi(G) = (s(G),\phi(G)) = (\s z,\phi(e(G'))=(\s z,\phi(G')) = (\s z,F)$. If $\Phi(G'') = (\s z,F)$ for some $G'' \in \p G$, then $s(G'') = \s z$ and $\phi(G'') = F$. We have $\Phi(e(G'')) = (s(e(G'')),\phi(e(G''))) = (s(e(G'')),\phi(G'')) = (\phi(G'')_{s(G'')},\phi(G''))$ by equation \eqref{eq1}, and hence $\Phi(e(G'')) = (F(\s z),F) = \Phi(G')$. By induction, $e(G'') = G'$. Since both $s(G'')=s(G) = \s z$ and $e(G'') = e(G) = G'$, we have $G''=G$ by (P\ref{p3}).

\medskip

The message of this section is the following. The crux of the bijective proof of the branching formula is to find the maps $s$ (start) and $e$ (erase). Properties (P\ref{p1}), (P\ref{p2}) and (P\ref{p3}) are easy corollaries of the construction, and we can then prove the bijectivity of the map $\p G \to \p F$, $G \mapsto (s(G),e(e(\cdots(e(G))\cdots)))$, via a completely abstract proof. The branching formula for the hook lengths, equation \eqref{branch}, follows because the right-hand side enumerates $\p G$, and the left-hand side enumerates $\p F$.

\section{Basic features of the bijection} \label{basic}

The descriptions of the left-hand and right-hand sides of equation \eqref{sh-branch} are very similar. The left-hand side counts the number of pairs $(\s s, F)$, where $\s s \in [\lambda]^*$ and $F$ is a map from $[\lambda]^* \setminus \p C^*[\lambda]$ to $[\lambda]^*$ so that $F(\s z)$ is in the punctured hook of $\s z$ for every $\s z \in [\lambda]^* \setminus \p C^*[\lambda]$. We think of the map $F$ as an arrangement of labels. The left drawing in Figure \ref{fig7} shows an example of such a pair $(\s s, F)$; the square $\s s$ is green. Denote the set of all such pairs by $\p F$. We also write $F_{\s z} = F_{i,j}$ for $F(\s z)$ when $\s z = (i,j)$.

\begin{figure}[hbt]
\psfrag{f1}{$14$}

\psfrag{f2}{$33$}

\psfrag{f3}{$14$}

\psfrag{f4}{$34$}

\psfrag{f5}{$25$}

\psfrag{f6}{$46$}

\psfrag{f7}{$27$}

\psfrag{f8}{$28$}

\psfrag{f9}{$25$}

\psfrag{f10}{$33$}

\psfrag{f11}{$27$}

\psfrag{f12}{$27$}

\psfrag{f13}{$27$}

\psfrag{f14}{$28$}

\psfrag{f15}{$\phantom{11}$}

\psfrag{f16}{$36$}

\psfrag{f17}{$36$}

\psfrag{f18}{$36$}

\psfrag{f19}{$56$}

\psfrag{f20}{$\phantom{11}$}

\psfrag{f21}{$55$}

\psfrag{f22}{$55$}

\psfrag{f23}{$56$}

\psfrag{f24}{$56$}

\psfrag{f25}{$\phantom{11}$}

\psfrag{f1g}{$16$}

\psfrag{f2g}{$34$}

\psfrag{f3g}{$33$}

\psfrag{f4g}{$15$}

\psfrag{f5g}{$55$}

\psfrag{f6g}{$17$}

\psfrag{f7g}{$17$}

\psfrag{f8g}{$28$}

\psfrag{f9g}{$22$}

\psfrag{f10g}{$25$}

\psfrag{f11g}{$25$}

\psfrag{f12g}{$55$}

\psfrag{f13g}{$46$}

\psfrag{f14g}{$27$}

\psfrag{f15g}{$\phantom{11}$}

\psfrag{f16g}{$37$}

\psfrag{f17g}{$34$}

\psfrag{f18g}{$45$}

\psfrag{f19g}{$36$}

\psfrag{f20g}{$\phantom{11}$}

\psfrag{f21g}{$45$}

\psfrag{f22g}{$55$}

\psfrag{f23g}{$56$}

\psfrag{f24g}{$56$}

\psfrag{f25g}{$\phantom{11}$}

\begin{center}
\epsfig{file=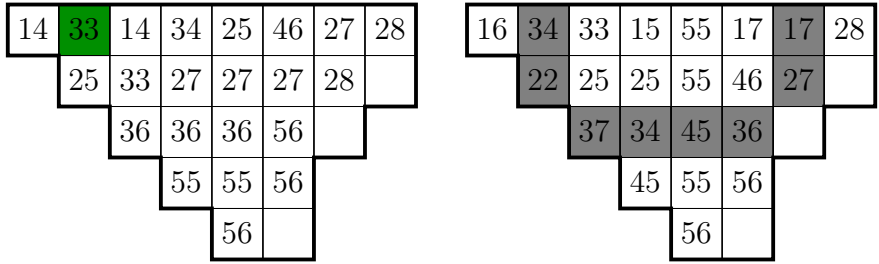,height=3.5cm}
\end{center}
\caption{A representative of the left-hand side and the right-hand side of \eqref{sh-branch} for $\lambda = 87532$.}
\label{fig7}
\end{figure}

\medskip

Similarly, the left-hand side of \eqref{sh-branch} counts the number of pairs $(\s c, G)$, where $\s c \in \p C^*[\lambda]$ and $G$ is a map from $[\lambda]^* \setminus \p C^*[\lambda]$ to $[\lambda]^*$ satisfying the following:
\begin{itemize}
 \item if $\s c$ lies in the hook of $\s z$, then $G(\s z)$ lies in the hook of $\s z$;
 \item otherwise, $G(\s z)$ lies in the punctured hook of $\s z$.
\end{itemize}
Note that if $\s c = (r,s)$, then $\s c$ lies in the hook of $\s z = (i,j)$ if and only if $i = r$ or $j = r-1$ or $j = s$. The right drawing in Figure \ref{fig7} shows an example of such a pair $(\s c, G)$; the row $r$ without the corners $(r,s)$ and the columns $r-1$ and $s$ without the corner $(r,s)$ are shaded. Denote the set of all $G$ such that $(\s c,G)$ satisfies the above properties by $\p G_{\s c}$, and the disjoint union $\bigsqcup_{\s c} \p G_{\s c}$ by $\p G$. We also write $G_{\s z} = G_{i,j}$ for $G(\s z)$ when $\s z = (i,j)$. We think of the map $G$ as an arrangement of labels.

\medskip

Some squares $\s z$ in shaded row and columns may satisfy $G_{\s z} = \s z$. Let us define sets $A,B,C$ as follows:
\begin{eqnarray*}
 \aligned
 A = A(G) & = \set{i \colon G_{i,s} = (i,s)} \\
 B = B(G) & = \set{j \colon G_{r,j} = (r,j)}\\
 C = C(G) & = \set{i \colon G_{i,r-1} = (i,r-1)} 
 \endaligned
\end{eqnarray*}

We think of sets $A$, $B$ and $C$ as representing column $s$ without $(r,s)$, row $r$ without $(r,s)$, and column $r-1$ respectively, with a dot in all squares satisfying $G_{\s z} = \s z$. Figure \ref{fig8} shows shaded row and columns for the example from the right-hand side of Figure \ref{fig7}.  The square $s(A,B,C)$, which will be defined in the next section, is green.

\begin{figure}[hbt]

\begin{center}
\epsfig{file=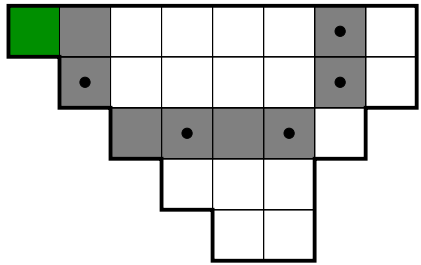,height=3.5cm}
\end{center}
\caption{Shaded row and columns and sets $A$, $B$ and $C$ for the example on the right-hand side of Figure \ref{fig7}, and $s(A,B,C)$.}
\label{fig8}
\end{figure}

\medskip

Furthermore, denote by $\p G_{\s c}^k$ the set of all $G \in \p G_{\s c}$ for which exactly $k$ of the squares in $[\lambda]^* \setminus \p C^*[\lambda]$ satisfy $G_{\s z} = \s z$ (note that such $\s z$ must be in shaded row and columns); in other words, $|A| + |B| + |C| = k$. 

\medskip

Before we continue, let us introduce some terminology relating to the shaded columns (the shaded row will not be relevant for these purposes). If $C$ is empty, we say that the shaded columns form a \emph{right stick}. If $A$ is empty and $C$ is non-empty, we say that the shaded columns form a \emph{left stick}. If $A = C$ and $|A| = |C| \geq 1$, we say that the shaded columns form a \emph{block}. If $A$ and $C$ are non-empty and $\min A = \max C$, we say that the shaded columns form a \emph{right snake}. If $A$ and $C$ are non-empty and $\max A = \min C$, we say that the shaded columns form a \emph{left snake}. The origin of these names should be clear from examples in Figure \ref{fig9}. Formations such as snake-dot, stick-dot, snake-block, snake-block-dot should be self-explanatory; see Figure \ref{fig9} for examples. We will often say that the shaded columns \emph{start} with a certain formation, for example, that the shaded columns start with a snake-dot. This means that if we erase dots in rows from some row $i$ onward, the resulting columns have the specified form. See Figure \ref{fig9} for examples.

\begin{figure}[hbt]

\begin{center}
\epsfig{file=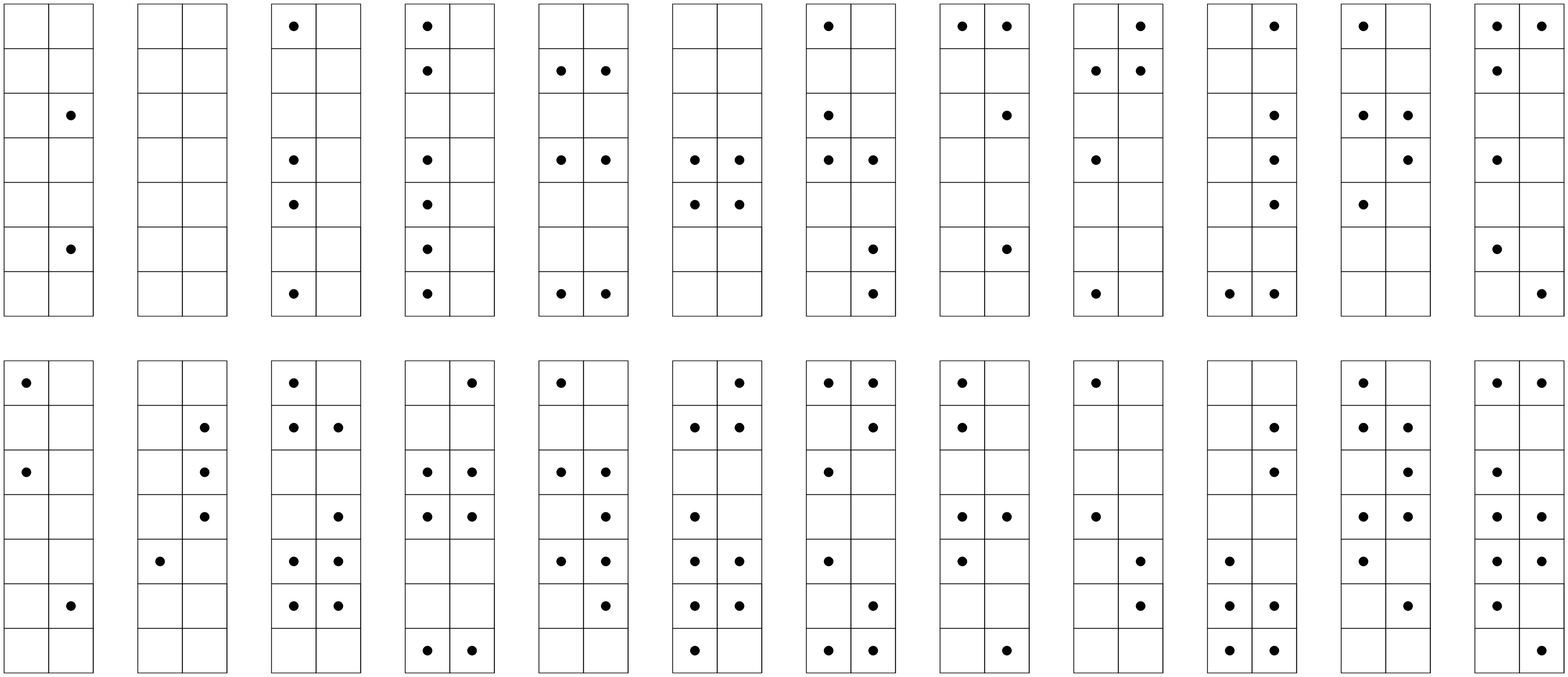,height=5cm}
\end{center}
\caption{In this figure, we have two examples of each of the following: the shaded columns form a right stick, left stick, block, right snake, left snake, snake-dot (top row), stick-dot, snake-block, snake-block-dot; the shaded columns start with a snake-dot, stick-dot, snake-block-dot (bottom row).}
\label{fig9}
\end{figure}

\medskip

Like in the non-shifted case, we will construct two maps, $s = s_{\s c}$ and $e = e_{\s c}$. The map $s \colon \p P([r-1]) \times \p P([r,\ldots,s-1]) \times \p P([r-1]) \to [\lambda]$ (``starting square''), which we extend to $s \colon \p G_{\s c} \to [\lambda]$ by $s(G) = s(A(G),B(G),C(G))$, and $e \colon \p G_{\s c} \to \p G_{\s c}$ (``erase a dot''). Again, they will have the properties (P\ref{p1}), (P\ref{p2}) and (P\ref{p3}) from Section \ref{non-shifted}. By the same argument as in the previous section, this implies that $G \mapsto (s(G),\phi(G))$, $\phi(G) = e(e(\cdots(e(G))\cdots))$, is a bijection from $\p G$ to $\p F$, and proves \eqref{sh-branch}.

\medskip

While this all seems almost exactly the same as in the non-shifted case, there are a few important differences. First, the map $s$ is fairly complicated; we have to split the possible configurations of the sets $A$, $B$ and $C$ into five separate cases, with the rule for finding the starting column fairly unintuitive. Second, in certain cases it is not immediately obvious which of the dots in the shaded columns should be erased. Note that a square $(i,j)$ with $j < r-1$ has two ``projections'' in column $r-1$ and two in column $s$, see Figure \ref{fig18}. 

\begin{figure}[hbt]
\psfrag{G}{$G$}
\begin{center}
\epsfig{file=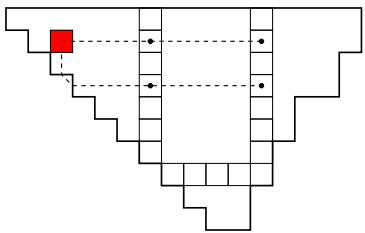,height=3cm}
\end{center}
\caption{Two projections of $(i,j)$ in column $r-1$, $(i,r-1)$ and $(j+1,r-1)$, and two projections in columns $s$, $(i,s)$ and $(j+1,s)$.}
\label{fig18}
\end{figure}

\medskip

On a related note, while the equality $h_{i,j} - 1 = (h_{i,s} - 1) + (h_{r,j} - 1)$ translates nicely into equalities $h_{i,j}^* - 1 = (h_{i,s}^* - 1) + (h_{j+1,r-1}^* - 1) = (h_{i,r-1}^* - 1) + (h_{j+1,s}^* - 1)$, it is far less obvious how to decompose the punctured hook of $(i,j)$ into punctured hooks of $(i,s)$ and $(j+1,r-1)$ (or $(i,r-1)$ and $(j+1,s)$). And the final, and perhaps the most intriguing, difference is that in most cases when the shaded columns start with a snake, the map $e$ does not just ``erase the dot'', but also ``flips the snake''. See Figure \ref{fig10} for some possible effects of $e$.

\begin{figure}[hbt]
\psfrag{G}{$G$}
\psfrag{eG}{$e(G)$}
\begin{center}
\epsfig{file=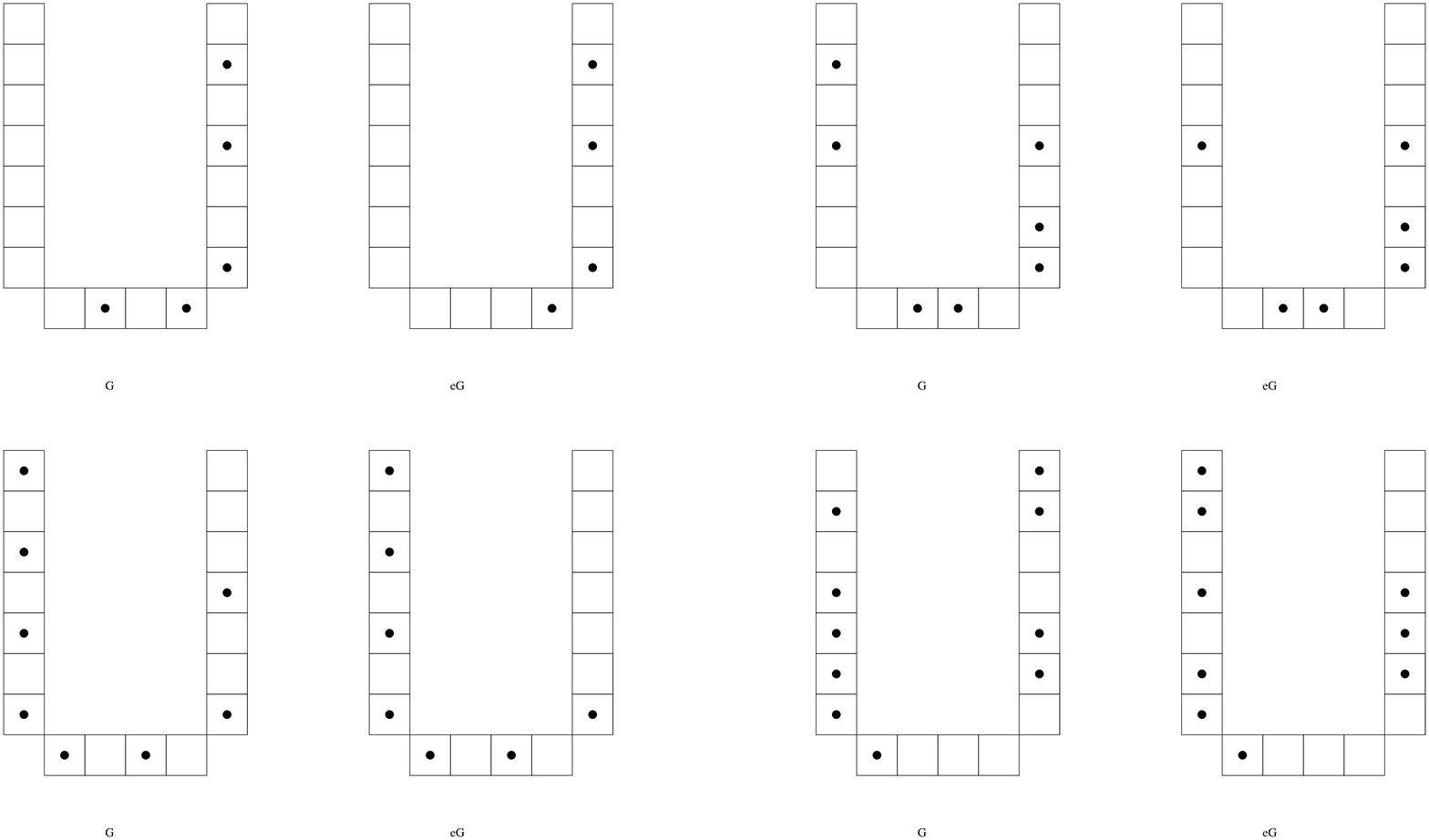,height=7cm}
\end{center}
\caption{Four possible pairs of $A(G)$, $B(G)$, $C(G)$ and $A(e(G)$, $B(e(G)$, $C(e(G))$. Note that in the last example, the right snake becomes a left snake.}
\label{fig10}
\end{figure}

\medskip

These issues are addressed in the next section. But first, we describe how to change the direction of a snake.

\medskip

We need some new notation. Pick a set $I = \set{i_1,i_2,\ldots,i_m} \subseteq [r-1]$. For $1 \leq k \leq m$, denote by $\p L_{I.k}$ the set of all arrangements $G \in \p G$ for which $i_1,\ldots,i_k \in A(G)$, $i_{k+1},\ldots,i_m \notin A(G)$, $i_1,\ldots,i_{k-1} \notin C(G)$, $i_k,\ldots,i_m \in C(G)$, and by $\p L_I$ the set $\bigcup_k L_{I,k}$. For $1 \leq k \leq m$, denote by $\p R_{I.k}$ the set of all arrangements $G \in \p G$ for which $i_1,\ldots,i_k \in C(G)$, $i_{k+1},\ldots,i_m \notin C(G)$, $i_1,\ldots,i_{k-1} \notin A(G)$, $i_k,\ldots,i_m \in A(G)$, and by $\p R_I$ the set $\bigcup_k R_{I,k}$. 

\medskip

The set $\p L_I$ (respectively, $\p R_I$) contains all arrangements so that the dots in rows $I$ form a left snake (respectively, right snake). Figure \ref{fig16} shows two examples. 

\begin{figure}[hbt]
\psfrag{f1g}{$23$}

\psfrag{f2g}{$18$}

\psfrag{f3g}{$18$}

\psfrag{f4g}{$57$}

\psfrag{f5g}{$15$}

\psfrag{f6g}{$19$}

\psfrag{f7g}{$18$}

\psfrag{f8g}{$19$}

\psfrag{f9g}{$39$}

\psfrag{f10g}{$\phantom{11}$}

\psfrag{f11g}{$29$}

\psfrag{f12g}{$48$}

\psfrag{f13g}{$44$}

\psfrag{f14g}{$28$}

\psfrag{f15g}{$66$}

\psfrag{f16g}{$77$}

\psfrag{f17g}{$28$}

\psfrag{f18g}{$39$}

\psfrag{f19g}{$34$}

\psfrag{f20g}{$38$}

\psfrag{f21g}{$35$}

\psfrag{f22g}{$37$}

\psfrag{f23g}{$39$}

\psfrag{f24g}{$38$}

\psfrag{f25g}{$\phantom{11}$}

\psfrag{f26g}{$57$}

\psfrag{f27g}{$45$}

\psfrag{f28g}{$48$}

\psfrag{f29g}{$67$}

\psfrag{f30g}{$48$}

\psfrag{f31g}{$55$}

\psfrag{f32g}{$77$}

\psfrag{f33g}{$67$}

\psfrag{f34g}{$68$}

\psfrag{f35g}{$68$}

\psfrag{f36g}{$67$}

\psfrag{f37g}{$\phantom{11}$}

\psfrag{f38g}{$\phantom{11}$}

\psfrag{f14}{$25$}

\psfrag{f17}{$68$}

\psfrag{f21}{$35$}

\psfrag{f24}{$38$}

\psfrag{f31}{$58$}

\psfrag{f34}{$58$}

\begin{center}
\epsfig{file=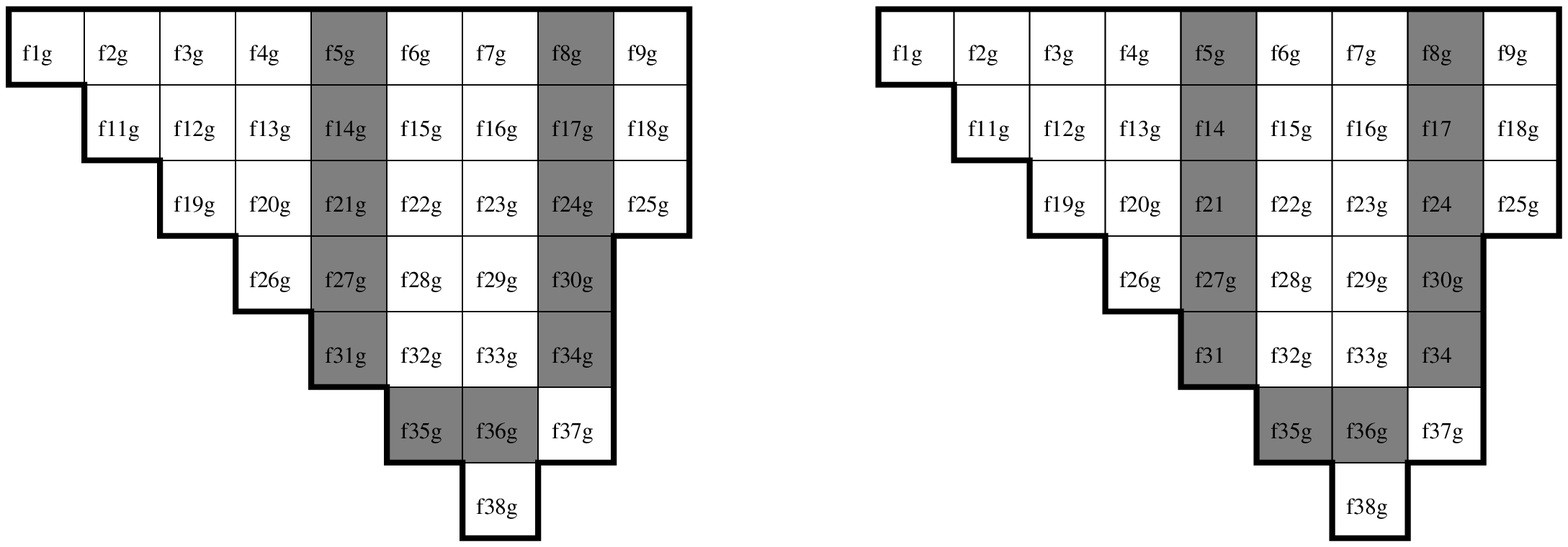,height=4.9cm}
\end{center}
\caption{An element of $\p L_I$ and $\p R_I$ for $\lambda = 9875431$, $\s c = (6,8)$, $I = \set{2,3,5}$.}
\label{fig16}
\end{figure}

\begin{lemma}[Flipping the snake] \label{flip}
 There is a bijection $\Psi_I \colon \p L_I \to \p R_I$ with the property that $G_{\s z} = \Psi_I(G)_{\s z}$ if $\s z \notin I \times \set{r-1,s}$. 
\end{lemma}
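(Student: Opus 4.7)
The plan is to construct $\Psi_I$ by giving a common combinatorial parameterization for both $\p L_I$ and $\p R_I$ and defining $\Psi_I$ as the passage from one to the other.

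The first step is a bookkeeping computation. Fix the values of $G$ at all positions outside $I \times \set{r-1, s}$. A direct count gives
\begin{equation*}
 |\p L_{I, k}| \,=\, \prod_{j<k} h^*_{i_j, r-1}\prod_{j>k} h^*_{i_j, s}, \qquad |\p R_{I, k}| \,=\, \prod_{j<k} h^*_{i_j, s}\prod_{j>k} h^*_{i_j, r-1}.
\end{equation*}
Using the hook-length expressions $h^*_{i, j} = \lambda_i + \lambda_{j+1}$ when $j < \ell(\lambda)$ (and the $s \geq \ell(\lambda)$ modification given in the introduction), the quantity $c := h^*_{i, r-1} - h^*_{i, s}$ turns out to be independent of $i \in [r-1]$. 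Substituting $h^*_{i, r-1} = h^*_{i, s} + c$, expanding, and grouping terms by the ``surviving'' set $R \subseteq [m]$ yields
\begin{equation*}
 |\p L_I| \,=\, |\p R_I| \,=\, \sum_{R \subsetneq [m]} c^{\,m - 1 - |R|}\prod_{j \in R} h^*_{i_j, s}.
\end{equation*}

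The second step is to realize this equality bijectively. For each row $i \in [r-1]$, I would fix once and for all an injection $\iota_i \colon H^*_{i, s} \hookrightarrow H^*_{i, r-1}$ sending the dot $(i, s)$ to the dot $(i, r-1)$, together with an identification of its $c$-element complement $E_i := H^*_{i, r-1} \setminus \iota_i(H^*_{i, s})$ with a fixed label set of size $c$. Given $G \in \p L_{I, k}$, I extract: the subset $R \subseteq [m]$ consisting of all $j > k$ together with those $j < k$ for which $G_{i_j, r-1} \in \iota_{i_j}(H^*_{i_j, s})$; the ``normal'' values $y_j \in H^*_{i_j, s}$ for $j \in R$, defined as $\iota_{i_j}^{-1}(G_{i_j, r-1})$ when $j < k$ and as $G_{i_j, s}$ when $j > k$; and, for each remaining $j \in [m]\setminus(R\cup\set k)$, the label of $G_{i_j, r-1} \in E_{i_j}$. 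Since these ``E-indices'' all lie in $[k-1]$, we recover $k = \max([m] \setminus R)$ from $R$ alone. Symmetrically, an element $G' \in \p R_{I, k'}$ is encoded by the same data with $k' = \min([m] \setminus R)$ and E-indices now lying below the pivot. Define $\Psi_I(G)$ to be the element of $\p R_I$ with the same triple $(R, (y_j)_{j\in R}, \text{E-labels})$.

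The main obstacle is the concrete construction of the injections $\iota_i$, which must be performed in a way compatible with the geometry of the shifted hook. Comparing $H^*_{i, r-1}$ and $H^*_{i, s}$ piece by piece (the row-$i$ segment, the column segment, and the row-$r$ or row-$(s+1)$ tail), the $c$-element surplus $E_i$ should correspond to a piece whose size is explicitly controlled by $\lambda_r$ and $\lambda_{s+1}$ (or its large-$s$ analogue), so that $\iota_i$ can be defined canonically and the resulting bijection has a natural ``snake-flipping'' interpretation. Given such $\iota_i$, the bijectivity of $\Psi_I$ reduces to the observation that both encodings map their respective sides onto the same parameter set---using $k = \max([m] \setminus R)$ on the $\p L_I$ side and $k' = \min([m] \setminus R)$ on the $\p R_I$ side, as in the counting argument. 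The locality requirement $\Psi_I(G)_{\s z} = G_{\s z}$ for $\s z \notin I \times \set{r-1, s}$ is immediate, since only shaded positions in rows of $I$ are ever altered.
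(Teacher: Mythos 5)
Your construction is correct in outline but takes a genuinely different route from the paper. The paper builds $\Psi_I$ by induction on $|I|$: it peels off the top row $i_1$, does a case analysis on the pivot position $k$ and on where $G_{i_1,r-1}$ lands (using identifications of punctured hooks of different squares within the \emph{same column}, e.g.\ $\overline H^*_{i_2,s}$ inside $\overline H^*_{i_1,s}$, and a piece of $\overline H^*_{i_1,r-1}$ inside $\overline H^*_{i_k,r-1}$), and then recurses on $I'=I\setminus\set{i_1}$. You instead give a one-shot re-encoding of both $\p L_I$ and $\p R_I$ by a common parameter set $(R,(y_j)_{j\in R},\mbox{E-labels})$, built from a row-by-row injection $\iota_i\colon H^*_{i,s}\hookrightarrow H^*_{i,r-1}$; this is arguably cleaner, makes the equality of cardinalities transparent, and avoids the induction entirely, at the cost of losing the explicit ``walk-like'' description of how labels migrate (which is what the paper's recursive version supplies when $\Psi_I$ is plugged into the map $e$). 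Two remarks. First, the step you flag as the ``main obstacle'' is not one for the lemma as stated: the lemma only asks for \emph{some} bijection fixing everything off $I\times\set{r-1,s}$, so any injection $\iota_i$ sending the dot to the dot will do, and such an injection exists because $h^*_{i,r-1}-h^*_{i,s}$ equals $\lambda_r-\lambda_{s+1}$ (or its $s\geq\ell(\lambda)$ analogue), which is nonnegative and independent of $i$ --- this is exactly your constant $c$; a canonical choice (arm to arm, leg-plus-tail to leg-plus-tail) is easy to write down if one wants the snake-flipping picture. Second, a small off-by-one: since the dot positions are forced, the fiber counts should read $\prod_{j<k}(h^*_{i_j,r-1}-1)\prod_{j>k}(h^*_{i_j,s}-1)$, i.e.\ punctured hook sizes, and likewise $\prod_{j\in R}(h^*_{i_j,s}-1)$ in the closed form; this affects neither $c$ nor the structure of the bijection, since $\iota_i$ restricted to $\overline H^*_{i,s}$ still lands in $\overline H^*_{i,r-1}$ with the same $c$-element complement $E_i$.
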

\begin{proof}
 We construct the map $\Psi_I$ inductively. If $|I| = 0$ or $|I| = 1$, the sets $\p L_I$ and $\p R_I$ are equal and we can take $\Psi_I$ to be the identity map. Now take $I = \set{i_1,i_2,\ldots,i_m}$, $m \geq 2$, $I' = I \setminus \set{i_1} = \set{i_2,\ldots,i_m}$, $G \in \p L_{I.k}$, and look at $G_{i_1,r-1}$.\\
 First take $k = 1$. Then $G_{i_1,r-1} = (i_1,r-1)$, $G_{i_1,s} = (i_1,s)$, $G_{i_2,r-1} = (i_2,r-1)$, $G_{i_2,s} \in \overline H^*_{i_2,s}$. A square $(i,s)$, $i > i_2$, in $\overline H^*_{i_2,s}$ is also in the punctured hook of $(i_1,s)$, and a square $(i_2,j)$, $j > s$, of $\overline H^*_{i_2,s}$ is in a natural correspondence with the square $(i_1,j)$, which is in the punctured hook of $(i_1,s)$. By slight abuse of notation, we can therefore define $G'_{i_1,s} = G_{i_2,s}$; we also define $G'_{i_2,s} = (i_2,s)$ and $G'_{\s z} = G_{\s z}$ for $\s z \neq (i_1,s), (i_2,s)$.\\
 Now note that $G'$ is an element of $\p L_{I'}$, and therefore we can take $G'' = \Psi_{I'}(G') \in \p R_{\p I'}$. By assumption, $\Phi_{I'}$ does not change values in row $i_1$, so $G''_{i_1,r} = G'_{i_1,r} = G_{i_1,r} = (i_1,r)$ and $G''_{i_1,s} = G'_{i_1,s} \in \overline H^*_{i_1,s}$. That means that $G'' \in \p R_I$, and we take $\Phi_I(G) = G''$.\\
 Now assume $k > 1$. If $G_{i_1,r-1}$ is of the form $(i_1,j)$ for $j > s$, then it is also in the punctured hook of $(i_1,s)$, and if it is of the form $(i,r-1)$ for $i \leq i_l$, then $(i,s)$ is in the punctured hook of $(i_1,s)$. By slight abuse of notation, we can then take $G'_{i_1,s} = G_{i_1,r-1}$, $G'_{i_1,r-1} = (i_1,r-1)$, $G'_{\s z} = G_{\s z}$ for $\s z \neq (i_1,r-1), (i_1,s)$. Since $G' \in \p L_{I'}$, we can take $G'' = \Phi_{I'}(G')$, and since $G'' \in \p R_I$, we define $\Phi_{I}(G) = G''$.\\
 Otherwise, $G_{i_1,r-1}$ is in the punctured hook of $(i_k,r-1)$ (where we identify squares $(i_1,j)$ and $(i_k,j)$ for $r-1 < j \leq s$). Note that if $k < m$, the square $G_{i_{k+1},s}$ is in the punctured hook of $(i_1,s)$ (where we identify squares $(i_1,j)$ and $(i_{k+1},j)$ for $s < j$). Define $G'_{i_1,r-1} = (i_1,r-1)$, $G'_{i_1,s} = G_{i_{k+1},s}$ (if $k < m$), $G'_{i_1,s} = G_{i_1,s}=(i_1,s)$ (if $k = m$), $G'_{i_k,r-1} = G_{i_1,r-1}$, $G'_{i_{k+1},s} = (i_{k+1},s)$ (if $k < m$), $G'_{\s z} = G_{\s z}$ for all other $\s z$. If $k = m$, then $G' \in \p R_I$, take $G'' = G'$; otherwise, $G' \in \p L_{I'}$, so we can define $G'' = \Phi_{I'}(G')$. Since $G'' \in \p R_I$ in either case, we define $\Phi_{I}(G) = G''$.\\
 It is easy to construct the analogous map $\p R_I \to \p L_I$, and to prove that such maps are inverses. The details are left to the reader.
\end{proof}

\begin{exm}
 The arrangement on the right-hand side of Figure \ref{fig16} is the image of the arrangement on the left-hand side.\\
 The following figure shows four further examples. Each block shows shaded columns for $G$, $G'$ and $G''$. In the first example, we have $G \in \p L_{I,1}$. In the second and third example, we have $G \in \p L_{I,l}$ with $1 < k < m$; $G'$ can have different $A$ and $C$, depending on $G_{i_1,s}$. The last example shows what happens if $G \in \p L_{I,m}$ and $G_{i_1,r-1}$ is in the punctured hook of $(i_m,r-1)$.

\begin{figure}[hbt]

\begin{center}
\epsfig{file=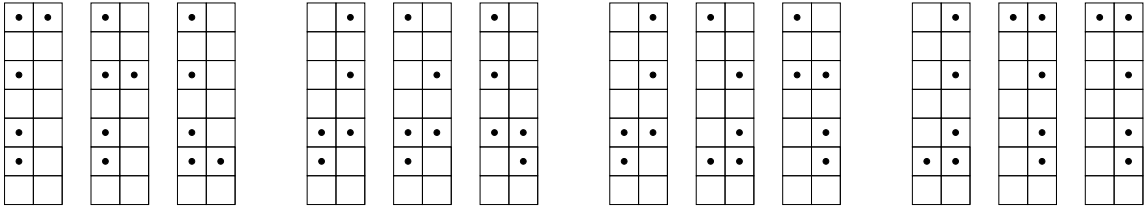,height=2.7cm}
\end{center}
\caption{Left snake, intermediate step, right snake, for four examples, with $r = 8$ and $I = \set{1,3,5,6}$.}
\label{fig14}
\end{figure}

\end{exm}

\section{Detailed description of the maps $s$ and $e$} \label{details}

In this section, we describe in full detail the two maps mentioned in Section \ref{basic}, $s$ and $e$. While we call the map $e$ ``erasing the dot'', its effect on the shaded row and columns is, as mentioned above, a bit more complicated.

\subsection*{Choosing the starting square}

We are given $G \in \p G_{\s c}$, and we want to construct the square $\s z = (i,j) \in [\lambda]^*$. For the row, the rule is simple and is a straightforward generalization of the rule in the non-shifted case.

\begin{enumerate}
 \renewcommand{\labelenumi}{(S\arabic{enumi})}
 \setcounter{enumi}{-1}
 \item \label{s0} $i$ is the row coordinate of the upper-most dot in the shaded columns; if both $A$ and $C$ are empty, take $i = r$. In other words, $i = \min(A \cup C \cup \set r)$.
\end{enumerate}

For the column $j$, there are several different rules depending on the formation of the shaded columns. The motivation for these rules is as follows. We want to think of $s(A,B,C)$ as the starting square of the hook walk that corresponds to the arrangement $G$. If $C$ is empty, we are essentially in the situation of the non-shifted case, and $s(A,B,C)$ is defined accordingly. If the shaded columns form a left stick or a right snake and $B$ is empty, then the dots determine a hook walk, which, of course, starts in column $r-1$. In other cases, we start from the top of the shaded columns, and try to see for how long the dots form a hook walk, i.e., a snake (either left or right). Once we hit an obstruction (like a block or a dot in the wrong column), we take the column corresponding to the row where the obstruction occurs, or to the last snake row.

\medskip

More precisely, we have exactly one of the following options:
\begin{enumerate}
 \renewcommand{\labelenumi}{(S\arabic{enumi})}
 \item \label{s1} the shaded columns form a right stick; in this case, $j$ is the column coordinate of the left-most dot in the shaded row or, if $B$ is empty, $j = s$;
 \item \label{s2} the shaded columns form a left stick or a right snake; in this case, $j = r - 1$;
 \item \label{s3} the shaded columns start with a stick-dot; in this case $j = k - 1$, where $k$ is the row coordinate of the dot;
 \item \label{s4} the shaded columns start with a block of length $\geq 2$; in this case $j = k-1$, where $k$ is the row coordinate of the second block row;
 \item \label{s5} the shaded columns start with a snake and do not form a right snake; in this case, count the number of true statements in the following list:
 \begin{itemize}
  \item the shaded columns start with a left snake;
  \item the shaded columns start with a snake-odd block;
  \item the shaded columns start with a snake-left dot or snake-block-left dot.
 \end{itemize}
 If this number is odd, let $j = k-1$, where $k$ is the row coordinate of the last snake row. If this number is even, the shaded columns must start with a snake-block; let $j = k-1$, where $k$ is the row coordinate of the first block row.
\end{enumerate}

An alternative description of (S\ref{s5}) is as follows. If the shaded columns form a left snake or a left snake snake-even block or a right snake-odd block or start with a snake-dot or a left snake-even block-right dot or a left snake-odd block-left dot or a right snake-even block-left dot or a right snake-odd block-right dot, let $j = k-1$, where $k$ is the row coordinate of the last snake row. If the shaded columns form a left snake-odd block or right snake-even block or start with a left snake-even block-left dot or a left snake-odd block-right dot or a right snake-even block-right dot or a right snake-odd block-left dot, let $j = k-1$, where $k$ is the row coordinate of the first block row. And if the shaded columns start with a snake-dot, let $j = k-1$, where $k$ is the row coordinate of the first block row.

\medskip

Figure \ref{fig11} should illuminate these rules. Clearly, (S\ref{s0}) and (S\ref{s1}) imply the first statement of (P\ref{p1}).

\begin{figure}[hbt]
\psfrag{G}{$G$}
\begin{center}
\epsfig{file=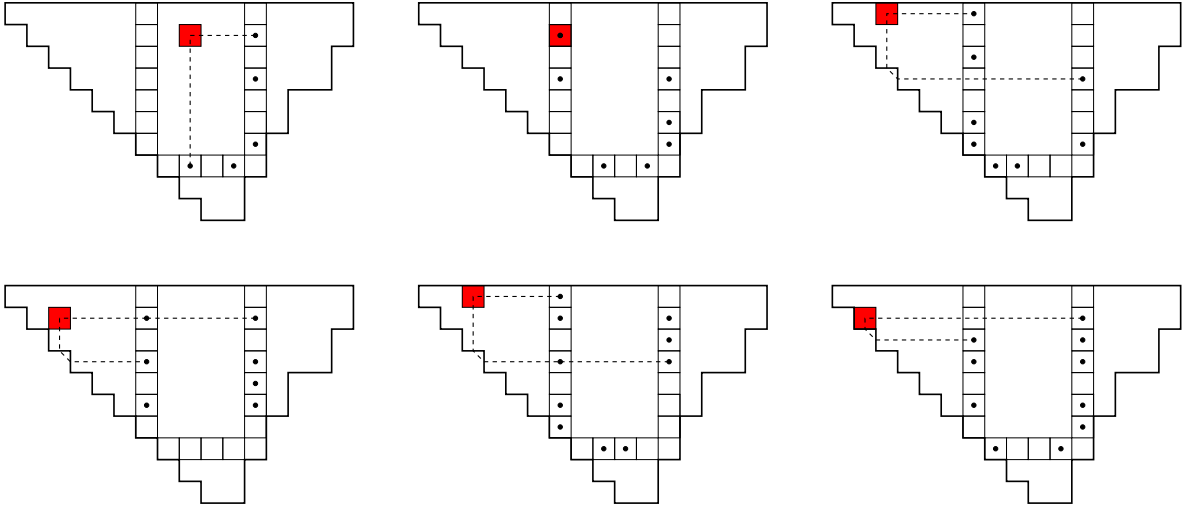,height=6.7cm}
\end{center}
\caption{The starting square when the shaded columns form a right stick, left snake, or start with a stick-dot (top), block, snake-block-dot -- two examples (bottom).}
\label{fig11}
\end{figure}

\medskip

In all cases, we could give these rules in terms of the sets $A$ and $C$ (sets $A$ and $B$ in (S\ref{s1})). For example, we could state (S\ref{s3}) as follows: if $C \neq \emptyset$ and $A \cap [\min C] \neq \emptyset$ and $\max(A \cap [\min C]) < \min C$, then $j = \min C - 1$, and if $A \neq \emptyset$ and $C \cap [\min A] \neq \emptyset$ and $\max(C \cap [\min A]) < \min A$, then $j = \min A - 1$. Such statements seem to be, however, much less intuitive than the descriptions above. 

\subsection*{Erasing the dot} In this subsection, we construct the map $e$, which acts as identity on $\p G_{\s c}^0$, and which maps $\p G_{\s c}^k$ into $\p G_{\s c}^{k-1}$ for $k \geq 1$; in other words, if there is at least one dot in the shaded columns and row, then $e(G)$ should have one fewer dot in these columns and row. Furthermore, $s$ and $e$ should satisfy properties (P\ref{p1}), (P\ref{p2}) and (P\ref{p3}).

\medskip

So pick $G \in \p G_{\s c} \setminus \p G_{\s c}^0$. There are three possibilities. First, $s(G) = (i,j)$ can be one of the shaded squares (i.e., $i = r$ or $j = r - 1$ or $j = s$); this happens in case (S\ref{s1}) if either $A = \emptyset$ or $B = \emptyset$, and in case (S\ref{s2}), and $e(G)$ differs from $G$ in exactly one square. Second, $s(G)$ can be a non-shaded square that appears to the right of the column $C$ (i.e., we have $i < r$ and $r - 1 < j < s$); this happens in case (S\ref{s1}) if both $A$ and $B$ are non-empty, and $e(G)$ differs from $G$ in exactly two squares. And third, $s(G)$ can be a non-shaded square to the left of the column $C$ (i.e., $j < r - 1$); this happens in cases (S\ref{s3}), (S\ref{s4}) and (S\ref{s5}). Now, $e(G)$ differs from $G$ in at least two squares.

\medskip

In the first case, $e(G)$ differs from $G$ only at $s(G)$. If $B = C = \emptyset$ and $A = \set{i,\ldots}$, then $s(G) = (i,s)$ and $G_{i,s} = (i,s)$; define $A' = A \setminus \set i$, $B' = B$, $C' = C$, and $e(G)_{i,s} = s(A',B',C')$. If $A = C = \emptyset$ and $B = \set{j,\ldots}$, then $s(G) = (r,j)$ and $G_{r,j} = (r,j)$, define $A' = A$, $B' = B \setminus \set j$, $C' = C$, and $e(G)_{r,j} = s(A',B',C')$. And otherwise (if $A$ and $C = \set{i,\ldots}$ form a left stick or a right snake), we have $s(G) = (i,r-1)$ and $G_{i,r-1} = (i,r-1)$; define $A' = A$, $B' = B$, $C' = C \setminus \set i$, and $e(G)_{i,r-1} = s(A',B',C')$. Then define $e(G)_{\s z} = G_{\s z}$ for $\s z \neq s(G)$. In all these cases, $A(e(G)) = A'$, $B(e(G))=B'$, $C(e(G)) = C'$ and $e(G)_{s(G)} = s(e(G))$. 

\medskip

The second case, when $A = \set{i,\ldots}$, $B = \set{j,\ldots}$, $C = \emptyset$, is almost exactly the same as in the non-shifted case. The square $G_{s(G)}$ is in the punctured hook of $s(G) = (i,j)$. This punctured hook is in a natural correspondence with $\overline H_{is}^* \sqcup \overline H_{rj}^*$. See Figure \ref{fig12}. So by slight abuse of notation, we define $e(G)$ as follows. If $G_{i,j} \in \overline H_{is}^*$, define $e(G)_{i,s} = G_{i,j}$, and if $G_{s(G)} \in \overline H_{rj}^*$, define $e(G)_{r,j} = G_{i,j}$. In the first case, also define $A' = A \setminus \set i$, $B' = B$, $C' = C$, and in the second case, define $A' = A$, $B' = B \setminus \set j$, $C' = C$. Then take $e(G)_{i,j} = s(A',B',C')$, $e(G)_{\s z} = G_{\s z}$ for $\s z \neq s(G)$. In both cases, we have $A(e(G)) = A'$, $B(e(G))=B'$, $C(e(G)) = C'$ and $e(G)_{s(G)} = s(e(G))$.

\begin{figure}[hbt]
\psfrag{G}{$G$}
\begin{center}
\epsfig{file=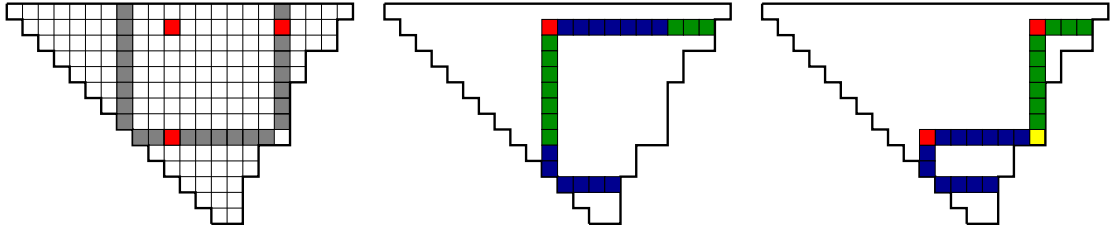,height=3cm}
\end{center}
\caption{The punctured hook of a square is the disjoint union of the punctured hooks of projections onto shaded row and column. The yellow square is in the punctured hook of both projections.}
\label{fig12}
\end{figure}

\medskip

As mentioned before, the third case splits into three possible subcases. the shaded columns either start with a stick-dot, or with a block of length $\geq 2$, or start with a snake and do not form a right snake. Again, it is important to decompose the punctured hook of $s(G) = (i,j)$, where $j < r - 1$. The important difference now is that we have two possible decompositions; we can either decompose it as $\overline H_{is}^* \sqcup \overline H_{j+1,r-1}^*$, or as $\overline H_{i,r-1}^* \sqcup \overline H_{j+1,s}^*$. See Figure \ref{fig13}.

\begin{figure}[hbt]
\psfrag{G}{$G$}
\begin{center}
\epsfig{file=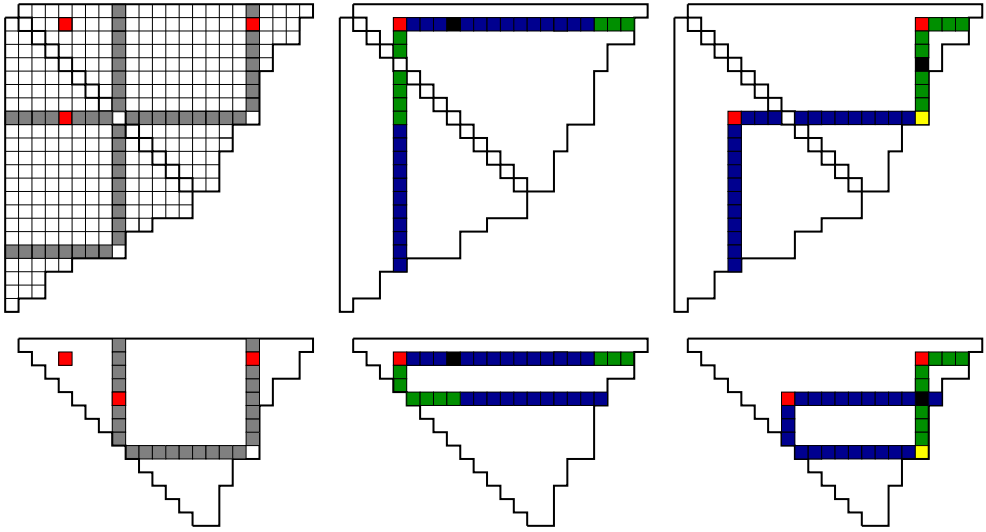,height=7cm}
\end{center}
\caption{The punctured hook of a square is the disjoint union of the punctured hooks of projections onto shaded columns. The decomposition is shown on the bottom, and the top is a graphic explanation of why this decomposition works. The yellow square is in the punctured hook of both projections, as is the black square in the top right picture.}

\label{fig13}
\end{figure}

\medskip

In the cases (S\ref{s3}), (S\ref{s4}) and (S\ref{s5}), we therefore describe the following:
\begin{itemize}
 \item which decomposition, $\overline H_{is}^* \sqcup \overline H_{j+1,r-1}^*$ or $\overline H_{i,r-1}^* \sqcup \overline H_{j+1,s}^*$, we use; 
 \item whether or not we have to use Lemma \ref{flip} to ``flip the snake'' (this can only happen in case (S\ref{s5})).
\end{itemize}

If we use decomposition $\overline H_{is}^* \sqcup \overline H_{j+1,r-1}^*$ and $G_{i,j} \in \overline H_{is}^*$, then define $e(G)_{i,s} = G_{i,j}$, $e(G)_{\s z} = G_{\s z}$ for $\s z \neq (i,s), (i,j)$, $A' = A \setminus \set i$, $B' = B$, $C' = C$. If $G_{i,j} \in \overline H_{j+1,r-1}^*$, then define $e(G)_{j+1,r-1} = G_{i,j}$, $e(G)_{\s z} = G_{\s z}$ for $\s z \neq (j+1,r-1), (i,j)$, $A' = A$, $B' = B$, $C' = C \setminus \set{j+1}$. Similarly, if we use decomposition $\overline H_{i,r-1}^* \sqcup \overline H_{j+1,s}^*$ and $G_{i,j} \in \overline H_{i,r-1}^*$, then define $e(G)_{i,r-1} = G_{i,j}$, $e(G)_{\s z} = G_{\s z}$ for $\s z \neq (i,r-1), (i,j)$, $A' = A$, $B' = B$, $C' = C \setminus \set i$. And if $G_{i,j} \in \overline H_{j+1,s}^*$, then define $e(G)_{j+1,s} = G_{i,j}$, $e(G)_{\s z} = G_{\s z}$ for $\s z \neq (j+1,s), (i,j)$, $A' = A \setminus \set{j+1}$, $B' = B$, $C' = C$. 

\medskip

As we will see, in cases (S\ref{s3}), (S\ref{s4}) and in one subcase of (S\ref{s5}), $s(A',B',C')$ is always in the punctured hook of $s(G) = s(A,B,C)$. In this case, define $e(G)_{i,j} = s(A',B',C')$.

\medskip

In most subcases of (S\ref{s5}), $s(A',B',C')$ is not in the punctured hook of $s(G) = s(A,B,C)$. In this case, we will use Lemma \ref{flip} to get $A''$, $B''$ and $C''$ with $|A''| + |B''| + |C''| = |A'| + |B'| + |C'|$ and so that $s(A'',B'',C'')$ is in the punctured hook of $s(G) = s(A,B,C)$. In this case, define $e(G)_{i,j} = s(A'',B'',C'')$.

\medskip

We can only use decomposition $\overline H_{is}^* \sqcup \overline H_{j+1,r-1}^*$ if $i \in A$ and $j+1 \in C$, and we can only use decomposition $\overline H_{i,r-1}^* \sqcup \overline H_{j+1,s}^*$ if $i \in C$ and $j+1 \in A$.

\medskip

Accordingly, for (S\ref{s3}), we have only one choice for the decomposition: if the shaded columns start with a right stick-left dot, use decomposition $\overline H_{is}^* \sqcup \overline H_{j+1,r-1}^*$, and if they start with a left stick-right dot, use decomposition $\overline H_{i,r-1}^* \sqcup \overline H_{j+1,s}^*$. In both cases $s(A',B',C')$ is in the punctured hook of $s(A,B,C)$: let us prove this only when we have a right stick-left dot. Suppose first that $A = \set{i,i',\ldots,}$, $C = \set{j+1,j'+1,\ldots}$ for $i' < j+1$. If $G_{i,j} \in \overline H_{is}^*$, then $A' = \set{i',\ldots}$ and $C' = \set{j+1,j'+1,\ldots}$ also form a stick-dot, and $s(A',B',C') = (i',j)$, which is in the punctured hook of $(i,j)$. If $G_{i,j} \in \overline H_{j+1,r-1}^*$, then the starting row for $A'$, $B'$ and $C'$ is still $i$, but the starting column is at least $j'$; thus $s(A',B',C')$ is in the punctured hook of $(i,j)$. Now suppose that $A = \set{i,i',\ldots,}$, $C = \set{j+1,j'+1,\ldots}$ for $i' > j+1$. If $G_{i,j} \in \overline H_{is}^*$, then the first coordinate of $s(A',B',C')$ for $A' = \set{i',\ldots}$ and $C' = \set{j+1,j'+1,\ldots}$ is $j+1$, and all of row $j+1$ is in the punctured hook of $(i,j)$. And if $G_{i,j} \in \overline H_{j+1,r-1}^*$, then the starting row for $A'$, $B'$ and $C'$ is still $i$, but the starting column is at least $j'$; thus $s(A',B',C')$ is in the punctured hook of $(i,j)$. The case when $|A| = 1$ or $|C| = 1$ is easy.

\medskip

Take case (S\ref{s4}). The shaded columns either form a block or start with a block-dot. If they form an even block, or start with an even block-right dot or odd block-left dot, use decomposition $\overline H_{is}^* \sqcup \overline H_{j+1,r-1}^*$. If they form an odd block, or start with an odd block-right dot or even block-left dot, use decomposition $\overline H_{i,r-1}^* \sqcup \overline H_{j+1,s}^*$. It is easy to see that in every case, $s(A',B',C')$ is in the punctured hook of $(i,j)$. For example, if the shaded columns start with an even block-right dot where the block has length $> 2$, $A = \set{i,j+1,j'+1,\dots}$ and $C = \set{i,j+1,j'+1,\ldots}$, then erasing $i$ from $A$ or $j+1$ from $C$ produces a formation that starts with right snake-even block-right dot; by (S\ref{s5}), $s(A',B',C') = (i,j')$, which is in the punctured hook of $(i,j)$. All other cases are tackled in a similar way.

\medskip

That leaves only the case (S\ref{s5}). The rule is as follows. If the shaded columns start with a left snake, use decomposition $\overline H_{is}^* \sqcup \overline H_{j+1,r-1}^*$, and if they start with a right snake, use decomposition $\overline H_{i,r-1}^* \sqcup \overline H_{j+1,s}^*$. Erasing $i$ from $A$ or $C$ always produces $s(A',B',C')$ which is in the punctured hook of $(i,j)$, as can be seen by an inspection of all possible cases. However, erasing $j+1$ from $A$ or $C$ has this property only if the shaded columns form a left stick-even block or right stick-odd block, or if they start with a left stick-even block-right dot, left stick-odd block-left dot, right stick-even block-left dot or right stick-odd block-right dot; see top row of Figure \ref{fig17}. Otherwise, erasing $j+1$ from $A$ or $C$ produces $A',C'$ with $s(A',B',C') = (i,j')$ for $j' \leq j$, which is \emph{not} in the punctured hook of $(i,j)$; see bottom row of Figure \ref{fig17}.

\begin{figure}[hbt]
\psfrag{G}{$G$}
\begin{center}
\epsfig{file=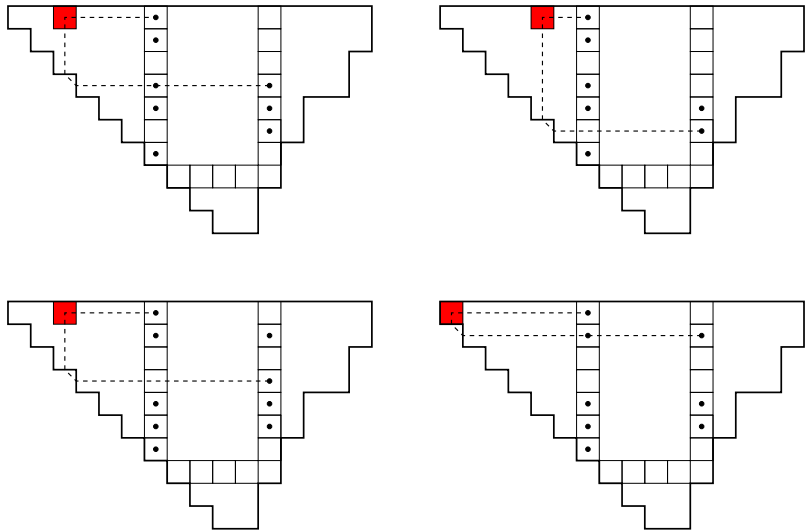,height=7cm}
\end{center}
\caption{In case (S\ref{s5}), deleting a dot in row $j+1$ can either produce $A',C'$ with $s(A',B,C')$ in the punctured hook of $s(A,B,C)$ (top) or not (bottom).}
\label{fig17}
\end{figure}

\medskip

In all these cases, however, changing a left snake to a right snake or vice versa gives $A'',B''=B,C''$ so that $s(A'',B'',C'') = (i,j')$ with $j' > j$. So Lemma \ref{flip} is the remaining ingredient of the construction of $e$. If we are in the position in case (S\ref{s5}) when erasing $j+1$ from $A$ or $C$ does not produce $A',B'=B,C'$ with $s(A',B',C')$ in the punctured hook of $s(A,B,C)$, apply $\Psi_I$ or $\Psi_I^{-1}$ to $G$ first, where $I = (A \cup C) \cap [j]$, and erase $j+1$ from the resulting $A'$ (if the original shaded columns started with a right snake) or $C'$ (if the original shaded columns started with a left snake) to get $A''$ and $C'$. This way, we get $s(A'',B'',C'')$ in the punctured hook of $s(A,B,C)$, and we are done.

\medskip

The property (P\ref{p1}) is clear, and the property (P\ref{p2}) follows by construction. Constructing the inverse implicit in (P\ref{p3}) is straightforward, if cumbersome, and is left as an exercise for the reader.

\section{Comparison with Sagan's proof} \label{comparison}

Apart from \cite{ckp} and \cite{konva}, the major source of motivation and inspiration for this paper was \cite{sagan}. Indeed, some of the notation (such as sets $A$, $B$ and $C$) was preserved on purpose. Though not explicitly stated, the map $s$ is hidden in the definition of the operators $N$, $R$, $L$, $T$ and the proof of Lemma 6. On the other hand, our snake-flipping (Lemma \ref{flip}) is equivalent to Lemma 7. There are, however, several advantages to our bijective approach. We mention a few in the following paragraphs.

\medskip

An obvious and minor difference is that in \cite{sagan}, the case $C = \emptyset$ is treated separately, in a section of its own, while we are able to place this case in a wider framework, with a special rule for $s$ and $e$. The second difference is that with our approach, there is no need to study ``basic trials'' (i.e. the case $B = \emptyset$) any differently than other cases.

\medskip

The complicated part of the proof is, of course, when $C \neq \emptyset$. Sagan offers an ad hoc construction for the case when $s(A,\emptyset,C) = (r-2,r-2)$, and the simultaneous reverse induction in the proof for $s(A,\emptyset,C) = (i,j)$ when $j < r-2$ is very tricky. Our proof, on the other hand, offers a unified description of how to select the dot to erase.

\medskip

Furthermore, we feel that the proof of the snake-flipping lemma is much more intuitive than \cite[proof of Lemma 7]{sagan}. It also sheds light on what the ``counterexample'' from \cite[\S 4]{sagan} means: that the hook walk by itself does not determine the projection onto shaded row and columns; we also need to pick random elements of the punctured hook on (some of) the squares in shaded columns. More abstractly, it says that snake-flipping is necessary; that we cannot hope to always be able to erase a dot from the shaded row and columns and hope for a new configuration with a starting square in the punctured hook of the starting square of the original configuration.

\medskip

Also, the bijective approach lends itself very naturally to the weighted formulas, and various other variants mentioned in Sections \ref{variants} and \ref{final}.

\section{Variants and weighted formulas} \label{variants}

The true power of a bijection lies in its robustness. In this section, we specialize and slightly adapt the bijection to get variants and generalizations of \eqref{sh-branch}.

\begin{thm} \label{vars}
 For a partition $\lambda$ of $n$ with distinct parts, we have the following equalities:
 \begin{equation*}
  \aligned
   n\cdot \!\!\!\!\!\!\!\!\!\prod_{\scriptscriptstyle(i,j) \in [\lambda]^* \setminus \p C^*[\lambda]} \!\!\!\!\!\!\!\!\!\left({ h^*_{ij} \!-\! 1}\right) &= \!\!\!\!\sum_{\scriptscriptstyle(r,s) \in \p C^*[\lambda]}{ \prod_{\stackrel{(i,j) \in [\lambda]^*\setminus \p C^*[\lambda]}{\scriptscriptstyle i \neq r,j \neq r-1,s}}\!\!\!\!\!\! (h_{ij}^* \!-\! 1)\prod_{i=1}^{r-1} h^*_{is} \prod_{j=r}^{s-1} h^*_{rj} \prod_{i=1}^{r-1} h^*_{i,r-1}} \\
   \lambda_1 \!\cdot \!\!\!\!\!\!\!\!\!\prod_{\scriptscriptstyle(i,j) \in [\lambda]^* \setminus \p C^*[\lambda]} \!\!\!\!\!\!\!\!\!\left({ h^*_{ij} \!-\! 1}\right) &= \!\!\!\!\sum_{\scriptscriptstyle(r,s) \in \p C^*[\lambda]}\!\!\!\!\!\left( h^*_{1,r-1} \!+\! h^*_{1s}\! - \!1 \right)\!\!\!\!\!\!{ \prod_{\stackrel{(i,j) \in [\lambda]^*\setminus \p C^*[\lambda]}{\scriptscriptstyle i \neq r,j \neq r-1,s}}\!\!\!\!\!\! (h_{ij}^* \!-\! 1)\prod_{i=2}^{r-1} h^*_{is} \prod_{j=r}^{s-1} h^*_{rj} \prod_{i=2}^{r-1} h^*_{i,r-1}} \\
   \!\!\prod_{\scriptscriptstyle(i,j) \in [\lambda]^* \setminus \p C^*[\lambda]}  \!\!\!\!\!\!\!\!\!\left({ h^*_{ij} \!-\! 1}\right) &= \!\!\!\!\sum_{\scriptscriptstyle(r,s) \in \p C^*[\lambda]}\!\!\!\!\!\left( { h^*_{1s}h^*_{2,r-1} \!+\! (h^*_{1,r-1}\! -\! 1)(h^*_{2s}\!-\!1)} \right)\!\!\!\!\!\!\!\!\!{ \prod_{\stackrel{(i,j) \in [\lambda]^*\setminus \p C^*[\lambda]}{\scriptscriptstyle i \neq r,j \neq r-1,s}}\!\!\!\!\!\!\!\! (h_{ij}^* \!- \!1)\prod_{i=3}^{r-1} h^*_{is} \!\!\!\!\!\!\prod_{j=\max\set{r,2}}^{s-1} \!\!\!\!\!\!h^*_{rj} \prod_{i=3}^{r-1} h^*_{i,r-1}}
  \endaligned
\end{equation*}
Here $h^*_{\s z} = 1$ if $\s z \notin [\lambda]^*$.
\end{thm}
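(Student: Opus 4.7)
The plan is to deduce all three equations from the bijection $\Phi\colon \p G \to \p F$ of Sections \ref{basic}--\ref{details}, by restricting the starting square $\s s$ to prescribed subsets of $[\lambda]^*$. The first equation is the shifted branching rule \eqref{sh-branch} itself, with the row-$r$ product corrected to $\prod_{j=r}^{s-1}$ (since in the shifted diagram row $r$ begins at column $r$), so it follows directly from the bijectivity of $\Phi$.

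For the second equation, $\lambda_1 \cdot \prod(h^*_{ij}-1)$ enumerates pairs $(\s s, F) \in \p F$ with $\s s$ in the first row of $[\lambda]^*$, which under $\Phi^{-1}$ correspond to $G \in \p G$ with $s(G)$ in row $1$, equivalently $1 \in A(G) \cup C(G)$ by rule (S\ref{s0}). Fix a corner $(r,s)$. By inclusion--exclusion, the row-$1$ contribution is $h^*_{1,r-1} + h^*_{1,s} - 1$: arrangements with $1 \in A$ contribute $h^*_{1,r-1}$ (the free choice for $G_{1,r-1}$, with $G_{1,s} = (1,s)$ fixed), arrangements with $1 \in C$ contribute $h^*_{1,s}$ symmetrically, and the intersection is the single arrangement with both squares fixed to themselves. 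All other squares contribute the same factors as in the first equation.

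The third equation is the most delicate. Its LHS counts pairs $(\s s,F)$ with $\s s = (1,1)$, so via $\Phi^{-1}$ it equals the number of $G \in \p G$ with $s(G) = (1,1)$. Fix a corner $(r,s)$. The condition $s(G) = (1,1)$ forces $1 \in A \cup C$ and, by inspection of (S\ref{s3})--(S\ref{s5}), forces $k = 2$ in whichever rule applies; thus the top two rows of the shaded columns must form a stick-dot, a block of length $\ge 2$, or a snake on rows $\{1,2\}$. Write $T_1 = \{G : G_{1,r-1}=(1,r-1),\ G_{2,s}=(2,s)\}$ and $T_2 = \{G : G_{1,s}=(1,s),\ G_{2,r-1}=(2,r-1),\ G_{1,r-1}\ne (1,r-1),\ G_{2,s}\ne (2,s)\}$, with all other squares ranging over their usual hooks and punctured hooks. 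Then $|T_1 \sqcup T_2|$ equals the stated coefficient times the remaining factors. The required bijection $T_1 \sqcup T_2 \to \{G : s(G)=(1,1)\}$ is built by cases on the signature $(A(G)\cap\{1,2\},C(G)\cap\{1,2\})$: arrangements with signature $(\{1\},\{2\})$, $(\{2\},\{1\})$, or $(\{1,2\},\{1,2\})$ (namely all of $T_2$ together with the stick-dot and block sub-cases of $T_1$) already satisfy $s(G)=(1,1)$ by (S\ref{s3}) or (S\ref{s4}) and are mapped identically. The remaining elements of $T_1$ have the right-snake signatures $(\{1,2\},\{1\})$ or $(\{2\},\{1,2\})$, with $s(G)=(1,r-1)$; these are mapped by $\Psi^{-1}_I$ from Lemma \ref{flip} (with $I$ the maximal set of rows forming a right snake at the top of the shaded columns) to left-snake arrangements with signatures $(\{1\},\{1,2\})$ or $(\{1,2\},\{2\})$, for which rule (S\ref{s5}) gives $s(G)=(1,1)$.

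The main obstacle is that a right-snake arrangement in $T_1$ may in fact continue as a longer snake, snake-block, or snake-dot on rows $\ge 3$, so the correct $I$ for $\Psi^{-1}_I$ depends on the structure below, and one must verify that the flipped arrangement lies in precisely the sub-case of (S\ref{s5}) yielding $j = 1$. This parallels the case analysis in Section \ref{details}; it is tractable because $\Psi_I$ leaves all squares outside $I \times \{r-1,s\}$ untouched, so rows below $I$, the shaded row, and the non-shaded squares contribute identical factors on both sides of the bijection.
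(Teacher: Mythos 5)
Your treatment of the first two equalities matches the paper's: the first is \eqref{sh-branch}, and for the second the paper makes exactly your inclusion--exclusion count $h^*_{1,r-1}+h^*_{1s}-1$ for the condition $1\in A(G)\cup C(G)$ coming from (S\ref{s0}) (it also checks the degenerate case $r=1$ separately, which you should do as well).

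The third equality is where there is a genuine gap. Your count $|T_1\sqcup T_2| = h^*_{1s}h^*_{2,r-1}+(h^*_{1,r-1}-1)(h^*_{2s}-1)$ is correct, and $T_1\sqcup T_2$ consists precisely of the fillings of the four squares whose top-two-row signature is the block, one of the two stick-dots, or one of the two \emph{right} snakes. But your prescription to send every right-snake element through $\Psi_I^{-1}$ to a left-snake arrangement fails in roughly half the cases. Whether the snake-on-rows-$\set{1,2}$ configurations satisfying $s(G)=(1,1)$ are the left snakes or the right snakes depends on the shaded columns in rows $3,\ldots,r-1$: if those rows are empty, form an even block, or start with an even block-right dot or odd block-left dot, then rule (S\ref{s5}) gives $j=1$ exactly for the left-snake tops, so a flip is needed; but if they form an odd block or start with an odd block-right dot or even block-left dot, it is the \emph{right}-snake tops that already satisfy $s(G)=(1,1)$, and flipping them produces arrangements with starting column $\geq 2$. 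Your justification that the right-snake elements of $T_1$ have $s(G)=(1,r-1)$ holds only when the entire shaded columns form a right snake, i.e., when rows $3,\ldots,r-1$ carry no dots; otherwise (S\ref{s2}) does not apply. The paper avoids constructing a bijection $T_1\sqcup T_2\to\set{G\colon s(G)=(1,1)}$ altogether: it fixes all labels outside the four squares, lists the admissible top-two-row fillings in each of the two cases above (block, the two stick-dots, and either the two left snakes or the two right snakes), and observes that the two resulting counts coincide and equal $h^*_{1s}h^*_{2,r-1}+(h^*_{1,r-1}-1)(h^*_{2s}-1)$ because $(h^*_{1s}-1)+(h^*_{2,r-1}-1)=(h^*_{1,r-1}-1)+(h^*_{2s}-1)$. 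To repair your argument you would have to make the application of $\Psi_I^{-1}$ conditional on exactly this parity data, at which point you are reproducing the paper's case analysis in bijective clothing. You also need the easy separate cases $r=1$ and $r=2$, where the condition reduces to $G_{1,1}=(1,1)$ and the coefficient degenerates accordingly.
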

\begin{proof}
 The first equality is just \eqref{sh-branch}. To get the second equality, note that the left-hand side enumerates all pairs $(\s z,F)$, where $\s z$ is in the first row and $F \in \p F$. So we have to prove that for every $\s c = (r,s) \in \p C^*[\lambda]$, the term on the right-hand side corresponding to $(r,s)$ enumerates all arrangements $G \in \p G_{\s c}$ and $s(G) = (1,j)$ for some $j$, $1 \leq j \leq \lambda_1$. If $r = 1$, this is true for all $\p G_{\s c}$, and there are 
 $$\prod_{\stackrel{(i,j) \in [\lambda]\setminus \p C[\lambda]}{\scriptscriptstyle i \neq 1}}(h_{ij}^* - 1) \prod_{j = 1}^{s-1} h^*_{1j} = \left( h^*_{1,r-1} + h^*_{1s} - 1 \right){ \prod_{\stackrel{(i,j) \in [\lambda]\setminus \p C[\lambda]}{\scriptscriptstyle i \neq r,j \neq r-1,s}} (h_{ij}^* - 1)\prod_{i=2}^{r-1} h^*_{is} \prod_{j=r}^{s-1} h^*_{rj} \prod_{i=2}^{r-1} h^*_{i,r-1}}$$
 such arrangements, where we are using the fact that $h^*_{10} = 1$ by definition and $h^*_{1s} = 1$.\\
 If $r > 1$, $G$ satisfies $s(G) = (1,j)$ if and only if $1 \in A(G) \cup C(G)$, by (S\ref{s0}). We can therefore put any labels in $(1,r-1)$ and $(1,s)$ except $(1,r-1)$ and $(1,s)$ simultaneously. That means that we have $h^*_{1,r-1} + h^*_{1,s} - 1$ possible labels in these squares. It follows that there are 
 $$\left( h^*_{1,r-1} + h^*_{1s} - 1 \right){ \prod_{\stackrel{(i,j) \in [\lambda]\setminus \p C[\lambda]}{\scriptscriptstyle i \neq r,j \neq r-1,s}} (h_{ij}^* - 1)\prod_{i=2}^{r-1} h^*_{is} \prod_{j=r}^{s-1} h^*_{rj} \prod_{i=2}^{r-1} h^*_{i,r-1}}$$
 such arrangements, as required.\\
 The last proof is a bit more involved. We want to see that a term on the right-hand side enumerates all $G$ so that $s(G) = (1,1)$. If $r = 1$, this is true if and only if $G_{1,1} = (1,1)$, and there are 
 $$\prod_{\stackrel{(i,j) \in [\lambda]\setminus \p C[\lambda]}{\scriptscriptstyle i \neq 1}}(h_{ij}^* - 1) \prod_{j = 2}^{s-1} h^*_{1j}$$
 such arrangements. If $r = 2$, we also have $s(G) = (1,1)$ if and only if $G_{1,1} = (1,1)$, and there are
 $$\prod_{\stackrel{(i,j) \in [\lambda]\setminus \p C[\lambda]}{\scriptscriptstyle i \neq 2,j \neq 1,s}} (h_{ij}^* - 1) \left[h^*_{1s} \prod_{j = 2}^{s-1} h^*_{2j}\right].$$
 If $r \geq 3$, we claim the following. Given labels $G_{\s z}$ for $\s z \notin \set{(1,r-1),(1,s),(2,r-1),(2,r)}$, there are $h^*_{1s}h^*_{2,r-1} + (h^*_{1,r-1} - 1)(h^*_{2s}-1)$ ways to add labels in $(1,r-1),(1,s),(2,r-1),(2,r)$ so that the resulting $G$ satisfies $s(G) = (1,1)$.\\
 Consider the rows from $3$ to $r-1$ of the shaded columns. If they are empty or form an even block or start with an even block-right dot or odd block-left dot, a careful application of rules (S\ref{s0})--(S\ref{s5}) shows that the top two rows can form any of the combinations shown on the top of Figure \ref{fig19}. 
 
 \begin{figure}[hbt]
\psfrag{G}{$G$}
\begin{center}
\epsfig{file=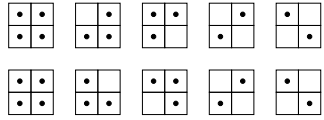,height=1.5cm}
\end{center}
\caption{Possible top two rows.}
\label{fig19}
\end{figure}

There are
 $$1 + (h_{1,r-1}^* - 1) + (h_{2s}^*-1) + (h_{1,r-1}^*-1)(h_{2s}^*-1) + (h_{1s}^*-1)(h_{2,r-1}^*-1)$$
 such labels. If, on the other hand, the rows from $3$ to $r-1$ of the shaded columns form an odd block or start with an odd block-right dot or even block-left dot, the top two rows can form any of the combinations on the bottom drawing in Figure \ref{fig19}, and there are 
 $$1 + (h_{1s}^* - 1) + (h_{2,r-1}^*-1) + (h_{1,r-1}^*-1)(h_{2s}^*-1) + (h_{1s}^*-1)(h_{2,r-1}^*-1)$$
 possible labels. Using the formula $(h_{1s}^* - 1) + (h_{2,r-1}^*-1) = (h_{1,r-1}^* - 1) + (h_{2s}^*-1)$ and simplifying, we get that both these expressions equal $h^*_{1s}h^*_{2,r-1} + (h^*_{1,r-1} - 1)(h^*_{2s}-1)$. There are therefore
 $$\left( { h^*_{1s}h^*_{2,r-1} + (h^*_{1,r-1} - 1)(h^*_{2s}-1)} \right){ \prod_{\stackrel{(i,j) \in [\lambda]\setminus \p C[\lambda]}{\scriptscriptstyle i \neq r,j \neq r-1,s}} (h_{ij}^* - 1)\prod_{i=3}^{r-1} h^*_{is} \prod_{j=\max\set{r,2}}^{s-1} h^*_{rj} \prod_{i=3}^{r-1} h^*_{i,r-1}}$$
 such arrangements, and it is easy to check that this formula is compatible with formulas for $r = 1$ and $r = 2$ obtained above.
\end{proof}

While weighted formulas were at the very center of \cite{ckp} and \cite{konva}, we mention them here only in passing. The reason is that the formulas we were able to obtain do not seem to simplify the bijection, unlike in the non-shifted case; in fact, because monomials have coefficients other than $1$, the bijection becomes more unwieldy. For a discussion of whether or not these weighted formulas are satisfactory and whether a better generalization might exist, see Section \ref{final}.

\medskip

The following figure shows how we weight the punctured hook of a square in the shifted diagram.

\begin{figure}[hbt]
\psfrag{f1g}{$23$}

\psfrag{f2g}{$18$}

\psfrag{f3g}{$18$}

\psfrag{f4g}{$57$}

\psfrag{f5g}{$15$}

\psfrag{f6g}{$19$}

\psfrag{f7g}{$18$}

\psfrag{f8g}{$19$}

\psfrag{f9g}{$39$}

\psfrag{f10g}{$\phantom{11}$}

\psfrag{f11g}{$29$}

\psfrag{f12g}{$48$}

\psfrag{f13g}{$44$}

\psfrag{f14g}{$28$}

\psfrag{f15g}{$66$}

\psfrag{f16g}{$77$}

\psfrag{f17g}{$28$}

\psfrag{f18g}{$39$}

\psfrag{f19g}{$34$}

\psfrag{f20g}{$38$}

\psfrag{f21g}{$35$}

\psfrag{f22g}{$37$}

\psfrag{f23g}{$39$}

\psfrag{f24g}{$38$}

\psfrag{f25g}{$\phantom{11}$}

\psfrag{f26g}{$57$}

\psfrag{f27g}{$45$}

\psfrag{f28g}{$48$}

\psfrag{f29g}{$67$}

\psfrag{f30g}{$48$}

\psfrag{f31g}{$55$}

\psfrag{f32g}{$77$}

\psfrag{f33g}{$67$}

\psfrag{f34g}{$68$}

\psfrag{f35g}{$68$}

\psfrag{f36g}{$67$}

\psfrag{f37g}{$\phantom{11}$}

\psfrag{f38g}{$\phantom{11}$}

\psfrag{f14}{$25$}

\psfrag{f17}{$68$}

\psfrag{f21}{$35$}

\psfrag{f24}{$38$}

\psfrag{f31}{$58$}

\psfrag{f34}{$58$}

\psfrag{x}{$x$}
\psfrag{y1}{$y_1$}
\psfrag{y2}{$y_2$}
\psfrag{y3}{$y_3$}
\begin{center}
\epsfig{file=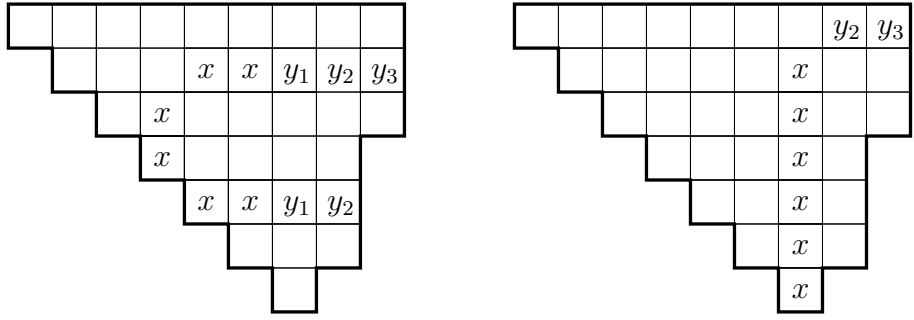,height=4.2cm}
\end{center}
\caption{Weighted punctured hooks of $(2,4)$ and $(1,7)$ in $\lambda = 9875431$ are $6x + 2y_1 + 2y_2 + y_3$ and $6x + y_1 + y_2$, respectively.}
\label{fig20}
\end{figure}

\medskip

More precisely, we define
$$\pH^*_{ij} = \Big\{  \begin{array}{r} \scriptstyle (2 \ell(\lambda) - 2 - i - j) \, x + 2 y_1 + \ldots + 2 y_{\lambda_{j+1}+j-\ell(\lambda)+1} + y_{\lambda_{j+1}+j-\ell(\lambda)+2}+ \ldots + y_{\lambda_i+ i - \ell(\lambda)}  \colon j < \ell(\lambda) \\ \scriptstyle (\max\set{k \colon \lambda_k \geq j + 1 - k \geq 1} - i)\, x + y_{j+2-\ell(\lambda)} + \ldots + y_{\lambda_i+i - \ell(\lambda)} \colon j \geq \ell(\lambda) \end{array}$$
for $(i,j) \in [\lambda]^*$.

\begin{thm} \label{weighted}
 Let $x, y_1,\ldots,y_{\lambda_1 + 1 - \ell(\lambda)}$ be some commutative variables. For a partition $\lambda$ of $n$ with distinct parts, we have the following polynomial equalities:
\begin{equation*}
 \aligned
 \left[ \sum_{(p,q) \in [\lambda]^*} x \, y_{q - \ell(\lambda) + 1} \right] \!\!\left[\prod_{(i,j) \in [\lambda]^* \setminus \p C^*[\lambda]} \!\!\!\overline{\p H}^*_{ij} \right] = \sum_{(r,s) \in \p C^*[\lambda]} \left[\prod_{\stackrel{(i,j) \in [\lambda]^* \setminus \p C^*[\lambda]}{\scriptscriptstyle i \neq r, j \neq r-1,s}} \!\!\!\overline{\p H}^*_{ij} \right] & \\
 %\prod_{i=1}^{r} (\pH^*_{is} + x)\prod_{j=r}^{s} \left(\pH^*_{rj} + {\Big\{ \begin{array}{r} \scriptstyle y_{j - \ell(\lambda)+1} \colon j \geq \ell(\lambda) \\ \scriptstyle x \colon j < \ell(\lambda) \end{array}}\right)\prod_{i=1}^{r-1} (\pH^*_{i,r-1}+x) & \\
 \times \prod_{i=1}^{r} (\pH^*_{is} + x)\prod_{j=r}^{\ell(\lambda)-1} (\pH^*_{rj} + x)\prod_{j=\ell(\lambda)}^{s} (\pH^*_{rj} + y_{j-\ell(\lambda)+1}) \prod_{i=1}^{r-1} (\pH^*_{i,r-1}+x) & \\
 \left[ (\ell(\lambda) - 1) x + \!\!\!\!\sum_{j = 1}^{\lambda_1 + 1 - \ell(\lambda)} \!\!\!\! y_j \right] \!\! \left[\prod_{(i,j) \in [\lambda]^* \setminus \p C^*[\lambda]} \!\!\!\overline{\p H}^*_{ij} \right] = \!\!\sum_{(r,s) \in \p C^*[\lambda]} \left[\prod_{\stackrel{(i,j) \in [\lambda]^* \setminus \p C^*[\lambda]}{\scriptscriptstyle i \neq r, j \neq r-1,s}} \!\!\!\overline{\p H}^*_{ij} \right] \!\!\left[\Big\{ \begin{array}{r} \scriptstyle \pH^*_{1,r-1} + \pH^*_{1s} + x \colon r \geq 2\\ \scriptstyle 1 \colon r = 1\end{array}\right] & \\
 \times \prod_{i=2}^{r} (\pH^*_{is} + x)\prod_{j=r}^{\ell(\lambda)-1} (\pH^*_{rj} + x)\prod_{j=\ell(\lambda)}^{s} (\pH^*_{rj} + y_{j-\ell(\lambda)+1}) \prod_{i=2}^{r-1} (\pH^*_{i,r-1}+x) & \\
  \left[\prod_{(i,j) \in [\lambda]^* \setminus \p C^*[\lambda]} \!\!\!\overline{\p H}^*_{ij} \right] = \sum_{(r,s) \in \p C^*[\lambda]} \left[\prod_{\stackrel{(i,j) \in [\lambda]^* \setminus \p C^*[\lambda]}{\scriptscriptstyle i \neq r, j \neq r-1,s}} \!\!\!\overline{\p H}^*_{ij} \right] \!\!\left[\Bigg\{ \begin{array}{r} \scriptstyle (\pH^*_{1s} + x)(\pH^*_{2,r-1} + x) + \pH^*_{1,r-1} \pH^*_{2s} \colon r \geq 3\\ \scriptstyle \pH^*_{1s} + x \colon r = 2 \\ \scriptstyle 1 \colon r = 1\end{array}\right] & \\
 \times \prod_{i=3}^{r} (\pH^*_{is} + x)\prod_{j=\max\set{r,2}}^{\ell(\lambda)-1} (\pH^*_{rj} + x)\prod_{j=\max\set{\ell(\lambda),2}}^{s} (\pH^*_{rj} + y_{j-\ell(\lambda)+1}) \prod_{i=3}^{r-1} (\pH^*_{i,r-1}+x) & \\
  \endaligned
 \end{equation*}
\end{thm}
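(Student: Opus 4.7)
The plan is to lift the bijection $\Phi \colon \p G \to \p F$, $G \mapsto (s(G), \phi(G))$, from Sections \ref{basic} and \ref{details} to a weight-preserving map. I interpret both sides of each identity as generating polynomials: on $\p F$, each pair $(\s s, F)$ contributes a starting-square weight (derived from the leading factor on the LHS) times the product of monomial weights of each $F(\s z)$ in the expansion of $\overline{\p H}^*_{\s z}$---namely $x$ for a square strictly below $\s z$ and $y_k$ or $2 y_k$ for a horizontal square. On $\p G$, for a corner $\s c = (r, s)$, labels outside the shaded row and columns receive their punctured-hook monomial, while labels on shaded squares are drawn from $\pH^*_{\s z} + x$ (or $\pH^*_{\s z} + y_{j - \ell(\lambda) + 1}$ in the horizontal tail $j \geq \ell(\lambda)$ of row $r$), where the additional $x$ (or $y_k$) represents the weight of a ``dot'' $G(\s z) = \s z$. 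These conventions match the factors in the first identity, and the second and third identities arise by restricting to $s(G)$ in row $1$ or $s(G) = (1, 1)$.

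I would then prove weight-preservation of $\Phi$ by induction on $k$, where $G \in \p G_{\s c}^k$. The base case $k = 0$ is immediate since $\phi(G) = G$ and $s(G) = \s c$, with weights matching by a direct check. For the inductive step, I compare the weights of $G$ and $e(G)$ through the cases (S\ref{s1})--(S\ref{s5}): the punctured-hook decompositions $\overline H^*_{ij} = \overline H^*_{is} \sqcup \overline H^*_{j+1, r-1}$ (and its dual) from Section \ref{details} refine to polynomial identities on $\overline{\p H}^*$-polynomials, in which the single overlap square of weight $x$ cancels correctly against the dot weight introduced at $s(G)$. A sub-case bookkeeping then verifies that the weight of $G$ equals the weight of $e(G)$ together with the redistributed weight at $s(G)$, at every step.

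The main technical hurdle will be weight-preservation of the snake-flipping bijection $\Psi_I$ of Lemma \ref{flip}, needed in the non-trivial sub-cases of (S\ref{s5}). I would argue by induction on $|I|$, following the inductive construction of $\Psi_I$: at each step, the natural correspondences between the punctured hooks $\overline H^*_{i_1, s}$ and $\overline H^*_{i_1, r-1}$ (or $\overline H^*_{i_k, r-1}$) used to build $\Psi_I$ match vertical squares with vertical squares (weight $x$ to $x$) and horizontal squares with horizontal squares (the $y_k$-weights align because the horizontal arm is controlled by $\lambda_{i_1}$ in both cases), while dots move only between columns $r - 1$ and $s$, both vertical positions carrying weight $x$. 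Telescoping over the recursive steps gives $w(\Psi_I(G)) = w(G)$, at which point the inductive argument for $\Phi$ goes through.

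Finally, the second and third identities follow from the same weighted bijection restricted to arrangements with $s(G)$ in row $1$, respectively $s(G) = (1, 1)$. For fixed $\s c = (r, s)$, the generating polynomial over allowed labels in the top row (or top two rows) of the shaded columns is evaluated by the same sub-case analysis on the shape of the shaded columns as in the proof of Theorem \ref{vars}; the polynomial factors $\pH^*_{1, r-1} + \pH^*_{1s} + x$ and $(\pH^*_{1s} + x)(\pH^*_{2, r-1} + x) + \pH^*_{1, r-1} \pH^*_{2s}$ are the natural weighted analogs of the integer factors $h^*_{1, r-1} + h^*_{1s} - 1$ and $h^*_{1s} h^*_{2, r-1} + (h^*_{1, r-1} - 1)(h^*_{2s} - 1)$ from Theorem \ref{vars} under the $+x$ dot convention, and the top-row weighted enumeration establishes the claimed coefficients directly.
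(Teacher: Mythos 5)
Your proposal follows essentially the same approach the paper takes: the paper explicitly omits the details, saying only that one weights each label $\s z'$ in a punctured hook by $x$ or $y_j$ according to the definition of $\pH^*_{ij}$ and then checks that the product of label weights is preserved by the bijection $\Phi = (s, \phi)$ of Sections \ref{basic} and \ref{details}, restricted appropriately for the second and third identities. Your additional remarks on the inductive weight-preservation of $e$ and of the snake-flip $\Psi_I$ are a reasonable elaboration of exactly the verification the paper leaves as an exercise.
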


We omit the (rather tedious) proof; it involves noting that given an arrangement $F \in \p F$ or $G \in \p G$, we have to replace the square $\s z'$ in the hook of $\s z$ by a variable $x$ or $y_j$, using Figure \ref{fig20} or the definition of $\pH^*_{ij}$ above. Then we have to prove that the product of labels in $G$ is equal to the product of labels in $\Phi(G)$.

\medskip

The details are left as an exercise for the reader.

\section{Toward $d$-complete posets} \label{d-complete}

An interestion generalization of diagrams and shifted diagrams are $d$-complete posets of Proctor and Peterson. Let us start with a brief outline of their definition and main features. See \cite{proc} for details.

\medskip

For $k \geq 3$, take a chain of $2k-3$ elements, and expand the middle element to two incomparable elements. Call the resulting poset a \emph{double-tailed diamond poset} (for $k = 3$, we call it a \emph{diamond}), and denote it by $d_k(1)$. Figure \ref{fig22} shows the Hasse diagram of $d_6(1)$ (rotated by $90^\circ$). An interval in a poset is called a $d_k$-interval if it is isomorphic to $d_k(1)$. A $d_3^-$-interval in a poset $P$ consists of three elements $w,x,y$, so that both $x$ and $y$ cover $w$. For $k \geq 4$, an interval in $P$ is called $d_k^-$-interval if it is isomorphic to $d_k(1) \setminus \set t$, where $t$ is the maximal element of $d_k(1)$. 

\medskip

\begin{figure}[hbt]
\begin{center}
\psfrag{1}{$1$}
\psfrag{2}{$2$}
\psfrag{3}{$3$}
\psfrag{4}{$4$}
\psfrag{5}{$5$}
\psfrag{6}{$6$}
\psfrag{7}{$7$}
\psfrag{8}{$8$}
\psfrag{9}{$9$}
\psfrag{10}{$10$}
\epsfig{file=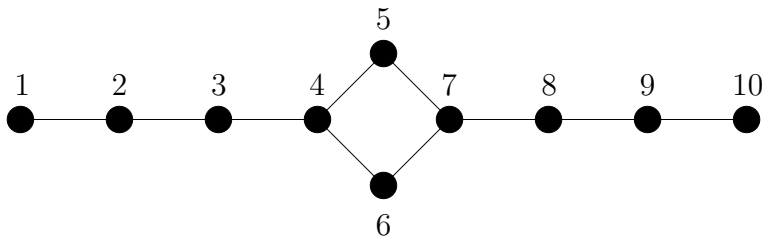,height=3cm}
\end{center}
\caption{The poset $d_5(1)$.}
\label{fig22}
\end{figure}

The essential property of $d$-complete posets is that there are we can ``complete'' every $d_k^-$-interval. Furthermore, $d_k$-intervals do not intersect. More precisely, we call a poset $(P, \leq)$ \emph{$d$-complete} if it has the following properties:
\begin{enumerate}
 \renewcommand{\labelenumi}{(D\arabic{enumi})}
 \item if $x$ and $y$ cover a third element $w$, there must exist a fourth element $z$ which covers each of $x$ and $y$;
 \item if $\set{x,y,z,w}$, $w < x,y < z$, is a diamond in $P$, then $z$ covers only $x$ and $y$ in $P$;
 \item no two elements $x$ and $y$ can cover each of two other elements $w$ and $w'$;
 \item if $[w,y]$ is a $d_k^-$-interval, $k \geq 4$, there exists $z$ which covers $y$ and such that $[w,z]$ is a $d_k$-interval;
 \item if $[w, z]$ is a $d_k$-interval, $k \geq 4$, then $z$ covers only one element in $P$;
 \item if $[w,y]$ is a $d_k^-$-interval, $k \geq 4$, and $x$ is the (unique) element covering $w$ in this interval, then $x$ is the unique element covering $w$ in $P$.
\end{enumerate}

The \emph{hook length} of an element $z$ of a $d$-complete poset $P$ is defined recursively as follows. If $z$ is not an element of a $d_k$-interval for any $k \geq 3$, then the hook length $h_z^P$ is $|\set{w \in P \colon w \leq z}|$. If there exists $w$ so that $[w,z]$ is a $d_k$-interval, and $x,y$ are the incomparable elements in $[w,z]$, then define $h_z^P = h_x^P + h_y^P - h_w^P$. We usually write $h_z$ for $h_z^P$ if the choice of the poset is clear.

\medskip

We can also define the \emph{hook} $H_z$ of an element $z$ in a $d$-complete poset $P$ so that $H_z \subseteq \set{w \in P \colon w \leq z}$, $z \in H_z$ and $|H_z| = h_z$. One possible definition goes as follows. Assume without loss of generality that $P$ is connected. Then it is easy to see that $P$ has a maximal element $z_0$. Pick $z$ and $w \leq z$. It can be proved that $h_z^{(w,z_0]}$ either equals $h_z^{[w,z_0]}$ or $h_z^{[w,z_0]}-1$ (note that the posets $(w,z_0]$ and $[w,z_0]$ are also $d$-complete). We say that $w \in H_z$ if and only if $h_z^{(w,z_0]}=h_z^{[w,z_0]}-1$. As usual, we write $\overline H_z$ for $H_z \setminus \set z$.

\medskip

One of the main properties of $d$-complete posets is that the hook length formula is still valid. If $f_P$ is the number of bijections $g \colon P \to [n]$ satisfying $g(z) \leq g(w)$ for $z \geq w$, then 
$$f_P = \frac{n!}{\prod_{z \in P} h_z}.$$

\medskip

Denote the set of all minimal elements of $P$ by $\min P$. By induction, the hook length formula is equivalent to the \emph{branching rule for $d$-complete posets}:
\begin{equation}\label{d-branch}
 n  \cdot \prod_{z \in P \setminus \min P} (h_z - 1) \ = \sum_{c \in \min P} \ \prod_{\substack{z \in P \setminus \min P\\c \notin H_z}} (h_z - 1) \prod_{\substack{z \in P \setminus \min P \\ c \in H_z}} h_z.
\end{equation}

In analogy with our framework for non-shifted and shifted diagrams, define $\p F$ as the set of all pairs $(z,F)$, where $z \in P$ and $F \colon P \setminus \min P \to P$ satisfies $F(z) \in \overline H_z$ for all $z$. Furthermore, define $\p G$ as the set of all pairs $(c,G)$, where $c \in \min P$ and $G \colon P \setminus \min P \to P$ satisfies $G(z) \in \overline H_z$ for all $z$, $c \notin H_z$, $G(z) \in H_z$ for all $z$, $c \in H_z$. For $c \in \min P$ and $k \geq 0$, denote by $\p G_c$ the set of all $G$ so that $(c,G) \in \p G$ and by $\p G_c^k$ the set of all $G \in \p G_c$ with $|\set{z \in P \colon G(z) = z}| = k$. Sometimes we write $F_z$ and $G_z$ instead of $F(z)$ and $G(z)$. For a $d$-complete poset $P$ and $c \in \min P$, denote by $A_c = A_c(G)$ the set $\set{z \in P \colon c \in \overline H_z}$. A proof of the following conjecture would provide a bijective proof of the hook length formula for $d$-complete posets.

\begin{conj}
 For a $d$-complete poset $P$ and a minimal element $c$, there exist maps $s = s_c \colon \p P(A_c) \to P$, which can be extended to $s \colon \p G_c \to P$ by $s(G) = s(A_c(G))$, and $e = e_c\colon \p G_{c} \to \p G_{c}$ satisfying the following properties:
 \begin{enumerate}
  \renewcommand{\labelenumi}{(P\arabic{enumi})}
  \item if $G \in \p G_{c}^0$, then $s(G) =c$ and $e(G) = G$, and if $G \in \p G_{c}^k$ for $k \geq 1$, then $e(G) \in \p G_{c}^{k-1}$;
  \item if $G \in \p G_{c}^k$ for $k \geq 1$, then $e(G)_{s(G)} = s(e(G))$; furthermore, if $z \not\geq s(G)$, then $e(G)_{z} = G_{z}$;
  \item given $G' \in \p G_{c}^k$ and $z$ for which $G'_{z} = s(G')$, there is exactly one $G \in \p G_{c}^{k+1}$ satisfying $s(G) = z$ and $e(G) = G'$.
 \end{enumerate}
\end{conj}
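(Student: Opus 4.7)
The plan is to proceed by a combination of induction on the rank of $P$ and a case analysis built on the Peterson-Proctor classification (or equivalently the slant-sum decomposition into irreducible $d$-complete posets). First, I would verify the conjecture for ordinary Young diagrams and shifted Young diagrams using the constructions in Sections \ref{non-shifted} and \ref{details}, which already realize maps $s$ and $e$ satisfying (P\ref{p1})--(P\ref{p3}). The task is then to extend this to the remaining irreducible families (shapes, shifted shapes, birds, insets, tailed insets, banners, nooks, swivels, tailed swivels, swivel shifts, and the exceptional types).

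The central technical step is the \emph{hook decomposition}: for $z \in P$ with $c \in \overline H_z$, I would express $\overline H_z$ as a disjoint union of punctured hooks $\overline H_{z_1}, \ldots, \overline H_{z_m}$ of suitable ``projections'' $z_i \in A_c$, up to natural bijections on overlaps. Iterating the identity $h_z - 1 = (h_x - 1) + (h_y - 1) - (h_w - 1) + (h_w - 1)$ coming from a $d_k$-interval $[w,z]$ with incomparable elements $x,y$, and using (D\ref{}1)--(D\ref{}6) to control how $c$ sits under $x$ and $y$, one obtains the $d$-complete analog of the two decompositions $\overline H_{ij}^* \cong \overline H_{is}^* \sqcup \overline H_{j+1,r-1}^* \cong \overline H_{i,r-1}^* \sqcup \overline H_{j+1,s}^*$ used in Section \ref{details}. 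In each irreducible type I expect two or more such decompositions to exist, with the choice encoded by the local ``formation'' of dots in $A_c$.

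With the decomposition in hand, I would define $s(G)$ by reading off the topmost dot of $A_c$ (in the order induced by $P$), following the chain of dots as long as they form a valid projected hook walk, and cutting off at the first ``obstruction'' (the analog of a block, snake-block, or stick-dot in (S\ref{s1})--(S\ref{s5})). The map $e(G)$ then projects $G_{s(G)}$ along the chosen decomposition, erases one dot from $A_c$, and, in the configurations where the direct projection lands in an ``incompatible'' branch, applies a generalized snake-flipping move on an initial segment of $A_c$, constructed by induction on its length in the spirit of Lemma \ref{flip}. Properties (P\ref{p1}) and (P\ref{p2}) would then follow directly from the construction; (P\ref{p3}) reduces to a local inverse check within each case.

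The principal obstacle, and the reason this remains a conjecture in the paper, is threefold. First, the set $A_c$ in a general $d$-complete poset need not be a union of two chains as in the shifted case; it may have branching and nested $d_k^-$-intervals, so even the correct analog of ``left snake'', ``right snake'', and ``block'' is unclear and will likely require a type-by-type definition. Second, the snake-flipping lemma uses in an essential way that flipping swaps labels between two parallel shaded columns of equal hook content; in the $d$-complete setting, the natural substitute is a local rearrangement along $A_c$, but its consistency across overlapping $d_k$-intervals is not automatic and may require a new combinatorial lemma establishing a hook-content-preserving bijection between competing ``orientations'' of dots in $A_c$. Third, checking (P\ref{p3}) was already the most laborious step in the shifted proof, and in the $d$-complete setting the number of cases will multiply; a uniform, classification-free argument (perhaps via slant-sum induction, decomposing $P$ at a slant-irreducible element and propagating the bijection through the sum) would be highly desirable, and finding such an argument is likely the key to turning this conjecture into a theorem.
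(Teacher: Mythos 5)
You have not proved anything here, and neither does the paper: the statement you are addressing is stated as a \emph{Conjecture}, and the paper's only contribution toward it is the Proposition that follows, which verifies the conjecture for partition diagrams, shifted diagrams, and double-tailed diamond posets, and shows it is preserved under disjoint unions and slant sums; the paper explicitly says the proof ``would be complete once we found maps $s_c$ and $e_c$ for all minimal elements of all other classes of irreducible $d$-complete posets.'' Your proposal is a roadmap that matches the paper's intended strategy almost exactly --- reduce to slant-irreducible pieces, find a hook decomposition generalizing $\overline H_{ij}^* \cong \overline H_{is}^* \sqcup \overline H_{j+1,r-1}^*$, define $s$ by walking the dots of $A_c$ until an obstruction, define $e$ by projecting and, when needed, applying a generalized snake flip --- and your three listed obstacles are precisely the ones the paper leaves open. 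But a roadmap is not a proof.

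The concrete gap is that every load-bearing step is asserted rather than established. You do not exhibit the hook decomposition for any irreducible family beyond shapes and shifted shapes: the identity $h_z - 1 = (h_x-1)+(h_y-1)-(h_w-1)+(h_w-1)$ is a numerical tautology, and turning it into a set-level bijection $\overline H_z \cong \overline H_{z_1} \sqcup \cdots \sqcup \overline H_{z_m}$ compatible with the recursive definition of hooks in a $d$-complete poset is exactly the hard content you would need to supply, case by case or uniformly. Likewise the ``generalized snake-flipping move'' is named but not constructed, and you yourself note that $A_c$ need not be a union of two chains, so even the objects being flipped are undefined in general. You correctly observe that a uniform, classification-free argument would be the key to turning the conjecture into a theorem, but you do not provide one, nor do you settle a single new case. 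As written, this is an accurate description of why the problem is open, not a proof that it is closed.
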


The following proposition presents some evidence for the conjecture. For the definition of a \emph{slant sum}, see \cite[page 67]{proc}.

\begin{prop}
 Let $P,P_1,P_2$ be $d$-complete posets.
 \begin{enumerate}
  \renewcommand{\labelenumi}{(\alph{enumi})}
  \item The conjecture holds for $P$ if $P$ is the diagram of a partition.
  \item The conjecture holds for $P$ if $P$ is the shifted diagram of a partition.
  \item The conjecture holds for $P$ if $P$ is a double-tailed diamond poset.
  \item If the conjecture holds for $P_1$ and $P_2$, it also holds for their disjoint union.
  \item If the conjecture holds for $P_1$ and $P_2$, it also holds for their slant sum.
 \end{enumerate}
\end{prop}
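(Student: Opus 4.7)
Parts (a) and (b) are essentially the main results of Section \ref{non-shifted} and Sections \ref{basic}--\ref{details}, respectively. In each case the maps $s_{\s c}$ and $e_{\s c}$ are constructed and properties (P\ref{p1})--(P\ref{p3}) verified directly. The only additional point is to confirm that the combinatorial hooks used there agree with the recursive $d$-complete hooks $h_z^P$: for the ordinary diagram the only $d_k$-intervals are $2 \times 2$ squares, and the identity $h_{i,j} = h_{i,j+1} + h_{i+1,j} - h_{i+1,j+1}$ is immediate; the shifted case is analogous using the explicit formula for $h_{ij}^*$ together with the additional $d_k$-intervals, $k \geq 4$, appearing in the shifted shape. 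Thus (a) and (b) follow.

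For part (c), let $P = d_k(1)$ with its unique minimum $c$, so that $A_c = P \setminus \{c\}$; the poset consists of a lower chain, a diamond, and an upper chain. I would define $s(S)$ for $S \subseteq A_c$ as the minimal element of $S$ in the poset order, with a small tie-breaking rule (in the spirit of (S\ref{s0})--(S\ref{s5})) when $S$ contains both diamond elements. The map $e$ removes $s(G)$ from the dot set and pushes the corresponding label through the natural identification of $\overline H_{s(G)}$ with $H_{s(e(G))}\setminus\{s(e(G))\}$, which is valid because each punctured hook in $d_k(1)$ is a chain up to at most one diamond element. Verification of (P\ref{p1})--(P\ref{p3}) then reduces to a short case analysis with three regimes: dots only on the lower tail, dots straddling the diamond, and dots on the upper tail.

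For part (d), if $P = P_1 \sqcup P_2$ and $c \in \min P_i$, then since hooks are contained in a single component we have $A_c^P = A_c^{P_i}$, and each $G \in \p G_c^P$ decomposes as $(G_i, G_{3-i})$ with $G_i \in \p G_c^{P_i}$ and $G_{3-i}$ a ``free'' arrangement on $P_{3-i} \setminus \min P_{3-i}$ (satisfying $G_{3-i}(z) \in \overline H_z$ for every $z$). Setting $s_c^P(G) = s_c^{P_i}(G_i)$ and $e_c^P(G) = (e_c^{P_i}(G_i), G_{3-i})$, all three properties descend immediately from their $P_i$-counterparts.

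For part (e), the slant sum $P$ identifies an element common to $P_1$ and $P_2$ (a maximal element of $P_1$ with a minimal element of $P_2$); hooks of elements of $P_1$ remain unchanged, while hooks of elements of $P_2$ extend through the glue into $P_1$. For $c \in \min P$, the set $A_c^P$ splits into an ``own-component'' part on which the $P_i$-construction applies and a ``tail'' whose hooks cross the glue. My plan is to run $s^{P_i}$ and $e^{P_i}$ on the first part and to splice in the $P_{3-i}$-machinery when the top dot lies in the tail, treating the glue element as the handoff point. The main obstacle is verifying (P\ref{p3}) at the interface: erasing a dot in the tail may cause the next starting square to migrate across the glue, and one must show that the preimage is uniquely defined there. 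This reduces to a local hook-length identity at the glue (analogous to $h_z^P = h_x^P + h_y^P - h_w^P$ for $d_k$-intervals) together with a careful matching of the domains and ranges of the two half-bijections on the fiber over the glue element.
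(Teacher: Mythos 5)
Parts (a), (b) and (d) agree with the paper's sketch. The problems are in (c) and (e).

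For (c), three things go wrong. First, with the labelling $x_1 > \cdots > x_{k-1},\, y_{k-1} > \cdots > y_1 = c$, the correct dot-set is $A_c = P \setminus \set{x_1, y_1}$, not $P \setminus \set{c}$: the maximal element $x_1$ has $h_{x_1} = 2k-3 = |P|-1$ and $c \notin H_{x_1}$, so $x_1$ can never be a dot. Second, the starting element should be the poset-\emph{maximal} dot $\max A$ (the hook walk descends from high in the poset toward $c$, exactly as in (S\ref{s0})), not the minimal one. Third, and most importantly, the delicate case is not ``$S$ contains both diamond elements'' but the case $\min\set{i \colon x_i \in A} = \max\set{i \colon y_i \in A} = j$, i.e.\ the top $x$-dot and the top $y$-dot sit at matching levels. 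There one must set $s(A) = x_1$ --- an element that is \emph{not} a dot --- and let the label $G_{x_1}$ decide, via the identification $\overline H_{x_1} \cong \overline H_{x_j} \sqcup \overline H_{y_j}$, whether $x_j$ or $y_j$ gets erased. This is the exact analogue of the block/snake mechanism in (S\ref{s4})--(S\ref{s5}); without it (P\ref{p3}) cannot hold (already for $A = \set{x_{k-1}, y_{k-1}}$ your rule has no well-defined extremal dot, and erasing a single dot without consuming the label at $x_1$ does not account for all preimages). Your ``three regimes'' do not isolate this matching-levels case, so the case analysis as described would not close.

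For (e), your plan of splicing the two machineries at the glue element and proving a ``local hook-length identity at the interface'' is not carried out --- you explicitly leave the verification of (P\ref{p3}) there as an open obstacle --- and it is also not what is needed. The paper's construction is much cruder: for $c \in P_2$, if every dot lies in $P_2$ one restricts to the $P_2$-machinery as in (d); if some dot lies outside $P_2$, one simply takes $s(G) = \max\set{z \colon G(z) = z}$ and lets $e$ erase that single dot, replacing its label by $s(A \setminus \set{s(G)})$. No decomposition of punctured hooks across the glue is required. As written, part (e) of your argument is a proposal for a harder construction rather than a proof.
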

\begin{skt}
 We showed (a) in Section \ref{non-shifted} and (b) in Sections \ref{basic} and \ref{details}.\\
 Take $P=d_k(1)$ for $k \geq 3$ (for $k = 3$, this is the diagram of the partition $22$, and for $k = 4$, this is the shifted diagram of the partition $321$). Label the elements by $x_1,\ldots,x_{k-1},y_1,\ldots,y_{k-1}$ in such a way that $x_1 > x_2 > \ldots >  x_{k-1},y_{k-1} > y_{k-2} > \ldots > y_1$. The only minimal element is $c = y_1$, and $A_c = P \setminus \set{x_1,y_1}$. It is also important to note that for all $j = 2,\ldots,k-1$, we have an identification of $\overline H_{x_1}$ and $\overline H_{x_j} \sqcup \overline H_{y_j}$, for example
 $$\setcounter{MaxMatrixCols}{13}
 \begin{matrix}
 \overline H_{x_1}: & x_2 & \cdots & x_j & x_{j+1} & \cdots & x_d & y_d & \cdots & y_{j+1} & y_j & \cdots & y_2 \\
 \overline H_{x_j}: &     &        &     & x_{j+1} & \cdots & x_d & y_d & \cdots & y_{j+1} & y_{j-1} & \cdots & y_1 \\
 \overline H_{y_j}: & y_1 & \cdots & y_{j-1} & & & & & & & &
 \end{matrix}$$
 For $A \subseteq A_c$, define
 $$s(A) = \left\{ \begin{array}{ccl} y_1 & : & A = \emptyset \\ \max A & : & A \neq \emptyset, \, \min\set{i \colon x_i \in A} \neq \max\set{i \colon y_i \in A} \\ x_1 & : & \min\set{i \colon x_i \in A} = \max\set{i \colon y_i \in A} \end{array} \right..$$
 Furthermore, if $A = \emptyset$, define $e(G) = G$; if $A \neq \emptyset$ and $\min\set{i \colon x_i \in A} \neq \max\set{i \colon y_i \in A}$, define $A' = A \setminus \set{\max A}$, $e(G)_{\max A} = s(A')$ and $e(G)_z = G_z$ for $z \neq \max A$. Finally, if $\min\set{i \colon x_i \in A} = \max\set{i \colon y_i \in A} = j$, we have $G_{x_1} \in \overline H_{x_j} \sqcup \overline H_{y_j}$. If $G_{x_1} \in \overline H_{x_j}$, define $A' = A \setminus \set{x_j}$, $e(G)_{x_1} = s(A')$, $e(G)_{x_j} = G_{x_1}$, $e(G)_z = G_z$ for $z \neq x_1,x_j$; and if $G_{x_1} \in \overline H_{y_j}$, define $A' = A \setminus \set{y_j}$, $e(G)_{x_1} = s(A')$, $e(G)_{y_j} = G_{x_1}$, $e(G)_z = G_z$ for $z \neq x_1,y_j$. It can be checked that such $s$ and $e$ satisfy (P\ref{p1}), (P\ref{p2}), and (P\ref{p3}). This proves (c).\\
 Suppose $P$ is a disjoint union of $d$-complete posets $P_1$ and $P_2$ with corresponding maps $s_1,e_1$, $s_2,e_2$. If $c \in P_i$, then $\set{z \in P \colon G(z) = z} \subseteq P_i$, and therefore we should take
 $$s(G) = s_i(G|_{P_i}),\qquad e(G)_z = \left\{\begin{array}{ccl} e_i(G|_{P_i})_z & : & z \in P_i \\ G_z & : & z \notin P_i \end{array} \right..$$
 Again, it can be checked that such $s$ and $e$ satisfy (P\ref{p1}), (P\ref{p2}), and (P\ref{p3}), and this proves (d).\\
 Suppose $P$ is a slant sum of $d$-complete posets $P_1$ and $P_2$ with corresponding maps $s_1,e_1$, $s_2,e_2$. If $c \in P_1$, or if $c \in P_2$ and $\set{z \in P \colon G(z) = z} \subseteq P_2$, the same construction as in (d) works. If, however, $c \in P_2$ and $\set{z \in P \colon G(z) = z} \not\subseteq P_2$, we have to take
 $$s(G) = \max \set{z \in P \colon G(z) = z},\qquad e(G)_z = \left\{ \begin{array}{ccl} G_z & : & z \neq s(G) \\ s(A') & : &  z = s(G) \end{array} \right.,$$
 where $A' = A \setminus \set{s(G)}$. It can be checked that such $s$ and $e$ satisfy (P\ref{p1}), (P\ref{p2}), and (P\ref{p3}). This proves (e) and finishes the proof of the proposition. \qed
\end{skt}

Proctor \cite{proc} gave a classification of irreducible $d$-complete posets. So the proof of the conjecture would be complete once we found maps $s_c$ and $e_c$ for all minimal elements of all other classes of irreducible $d$-complete posets. While some of the cases are pretty straightforward, the map $s$ is likely to be extremely complicated for, say, the ``bat'' and its unique minimam element.

\section{Final remarks} \label{final}

\subsection{} Our weighted formulas are slightly disappointing, since all the ``height'' variables $x_1,\ldots,x_{\ell(\lambda)}$ have merged into one variable, which also appears in columns $1,\ldots,\ell(\lambda)-1$. There are certain indications, however, that this is the best possible generalization (the author would be thrilled to be proved wrong though); let us mention two. First, the weighted punctured hooks $\pH_{ij}$ should satisfy the formula $\pH^*_{ij} = \pH^*_{i,r-1} + \pH^*_{j+1,s} = \pH^*_{i,s} + \pH^*_{j+1,r-1}$, and it should be obvious from Figure \ref{fig13} that this is not easily obtainable unless many of the variables are equal. Also, for the staircase shape partition $(k,k-1,\ldots,1)$, the ``natural'' weighted generalization of $n$ on the left-hand side of \eqref{sh-branch} is $\sum_{i \leq j} x_i y_j$, which should be factored into linear terms on the right-hand side, and this is obviously impossible. Our weighted version, in which both sides are just integer multiples of a power of $x$, obviously avoids this issue.

\subsection{} There is another variant of \eqref{sh-branch} obtained by finding all formations of the shaded columns that satisfy $s(A,B,C) = (1,m)$ for $1 \leq m \leq \lambda_1$: 

$$\prod_{\scriptscriptstyle(i,j) \in [\lambda]^* \setminus \p C^*[\lambda]}  \left({ h^*_{ij} - 1}\right) = \sum_{\scriptscriptstyle(r,s) \in \p C^*[\lambda]}H_{\lambda,r,s,m}\!\!\!{\prod_{\stackrel{(i,j) \in [\lambda]^*\setminus \p C^*[\lambda]} {\scriptscriptstyle i \neq r,j \neq r-1,s}} \!\!\!\!\!(h_{ij}^* - 1)\prod_{i=m+2}^{r-1} h^*_{is} \!\!\!\! \!\!\prod_{j=\max\set{r,m+1}}^{s-1} \!\!\!\!\!\! h^*_{rj} \prod_{i=m+2}^{r-1} h^*_{i,r-1}},$$
where $H_{\lambda,r,s,m}$ is
%$$\begin{array}{r} \displaystyle \prod_{i = 1}^{r-1} (h_{i,r-1}^*\! - \!1) \prod_{j = r}^{m-1} (h_{r,j}^*\! - \!1) \prod_{i = 2}^{r-1} (h_{i,s}^*\! - \!1)\colon 1 \leq r \leq m \\  \displaystyle \prod_{i = 2}^{r-1} h_{i,r-1}^*  \prod_{i = 1}^{r-1} (h_{i,s}^*\! - \!1) + \!\!\! \sum_{\stackrel{I \subseteq [m],k \in I}{\scriptscriptstyle \min I = 1}} \, \,  \prod_{i \neq I \mbox{\tiny{ or }} i > k} \!\!\!(h^*_{i,r-1}\! - \!1) \!\!\!\!\prod_{i \neq I \mbox{\tiny{ or }} i < k}\!\!\! (h^*_{i,s} \! - \! 1)\colon r = m + 1 \\ \displaystyle (h^*_{m+1,r-1}\! - \!1)\prod_{i = 2}^{m} h_{i,r-1}^*  \prod_{i = 1}^{m} (h_{i,s}^*\! - \!1) + \!\!\! \sum_{\stackrel{I \subseteq [m],k \in I}{\scriptscriptstyle \min I = 1}} \, \, \prod_{i \neq I \mbox{\tiny{ or }} i > k} \!\!\!(h^*_{i,r-1}\! - \!1) \!\!\!\!\prod_{i \neq I \mbox{\tiny{ or }} i < k}\!\!\! (h^*_{i,s} \! - \! 1) + \phantom{\colon r \geq m + 2} \\ \displaystyle (h^*_{m+1,s}\! - \!1) \prod_{i = 2}^{k} h_{i,s}^*  \prod_{i = 1}^{m} (h_{i,r-1}^*\! - \!1) + \!\!\!\!\!\!\!\!\!\!\! \sum_{\stackrel{I \subseteq [m+1],k \in I}{\scriptscriptstyle \min I = 1,\max I = m+1}}  \prod_{i \neq I \mbox{\tiny{ or }} i > k} \!\!\!(h^*_{i,r-1}\! - \!1) \!\!\!\!\prod_{i \neq I \mbox{\tiny{ or }} i < k}\!\!\! (h^*_{i,s} \! - \! 1) \colon r \geq m + 2\end{array}$$
$$\aligned
& \prod_{i = 1}^{r-1} (h_{i,r-1}^*\! - \!1) \prod_{j = r}^{m-1} (h_{r,j}^*\! - \!1) \prod_{i = 2}^{r-1} (h_{i,s}^*\! - \!1) \qquad\qquad\qquad\qquad\qquad\qquad\qquad\qquad\mbox{ if } 1 \leq r \leq m \\  
& \sum_{\stackrel{I \subseteq [m],k \in I}{\scriptscriptstyle \min I = 1}} \, \,  \prod_{i \notin I \mbox{\tiny{ or }} i > k} \!\!\!(h^*_{i,r-1}\! - \!1) \!\!\!\!\prod_{i \notin I \mbox{\tiny{ or }} i < k}\!\!\! (h^*_{i,s} \! - \! 1) + \prod_{i = 2}^{r-1} h_{i,r-1}^*  \prod_{i = 1}^{r-1} (h_{i,s}^*\! - \!1) \qquad\qquad\quad\:\!\mbox{ if } r = m + 1 \\ 
& \!\!\! \sum_{\stackrel{I \subseteq [m],k \in I}{\scriptscriptstyle \min I = 1}} \, \, \prod_{i \notin I \mbox{\tiny{ or }} i > k} \!\!\!(h^*_{i,r-1}\! - \!1) \!\!\!\!\prod_{i \notin I \mbox{\tiny{ or }} i < k}\!\!\! (h^*_{i,s} \! - \! 1) + \!\!\!\!\!\!\!\!\!\!\! \sum_{\stackrel{I \subseteq [m+1],k \in I}{\scriptscriptstyle \min I = 1,\max I = m+1}}  \prod_{i \notin I \mbox{\tiny{ or }} i > k} \!\!\!(h^*_{i,r-1}\! - \!1) \!\!\!\!\prod_{i \notin I \mbox{\tiny{ or }} i < k}\!\!\! (h^*_{i,s} \! - \! 1) \:\:+ \\ 
& (h^*_{m+1,r-1}\! - \!1)\prod_{i = 2}^{m} h_{i,r-1}^*  \prod_{i = 1}^{m} (h_{i,s}^*\! - \!1) + (h^*_{m+1,s}\! - \!1) \prod_{i = 2}^{m} h_{i,s}^*  \prod_{i = 1}^{m} (h_{i,r-1}^*\! - \!1)  \qquad\:\,\mbox{ if } r \geq m + 2
\endaligned$$

For $m = 1$, this is, of course, the third equality of Theorem \ref{vars}, and the proof for a general $m$ is very similar. Just let us note that
$$\sum_{\stackrel{I \subseteq [m],k \in I}{\scriptscriptstyle \min I = 1}} \, \, \prod_{i \neq I \mbox{\tiny{ or }} i > k} \!\!\!(h^*_{i,r-1}\! - \!1) \!\!\!\!\prod_{i \neq I \mbox{\tiny{ or }} i < k}$$
counts the number of ways in which we can choose labels in $[1,m] \times \set{r-1,s}$ so that the shaded columns up to row $r$ form a left snake that starts in row $1$.

\medskip

A weighted version is also possible, but is omitted. While this formula seems to be too complicated to be of real interest, we should mention that it indeed gives $\lambda_1-1$ formulas that are not equivalent to the formulas in Theorem \ref{vars}. In the non-shifted case, finding all $G$ satisfying $s(G) = (1,m)$ would yield formulas that are equivalent to the case $m = 1$ for the partition $(\lambda_1-m,\lambda_2-m,\ldots)$.

\medskip

We can similarly enumerate all $G$ so that $s(G) = (i,m)$ for some fixed $m$. The resulting formula, which we omit, has $m \cdot \prod_{\scriptscriptstyle(i,j) \in [\lambda]^* \setminus \p C^*[\lambda]}  ({ h^*_{ij} - 1})$ on the left-hand side, and on the right, the sticks and the snakes need not start in row $1$.

\subsection{} We were not able to find ``complementary'' formulas in the spirit of \cite{konva}, and we leave them as an open problem. The idea, following \cite[\S 6]{konva}, should be to look at the ``complement'' of the given shifted tableau (see Figure \ref{fig21}), whose corners are the ``outer'' corners of the original partition; to write out the formulas from Theorem \ref{vars} or Theorem \ref{weighted} for this complementary partition; to cancel out the terms that the left-hand side and the right-hand side have in common; and then, for each square $\s z^*$ appearing in such reduced formula, to find a square $\s z$ of the original shifted diagram with satisfying $h_{\s z} = h_{\s z^*}$ or $h_{\s z} +1 = h_{\s z^*} - 1$; and to finally change this ``reduced'' complementary formula into a formula that involves products over all squares. The author was not able to follow through with this approach.

 \begin{figure}[hbt]
\psfrag{G}{$G$}
\begin{center}
\epsfig{file=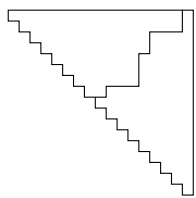,height=2.5cm}
\end{center}
\caption{Complementary shifted partition.}
\label{fig21}
\end{figure}

\subsection{} Hook walk proofs of Theorem \ref{weighted} should be possible as a generalization of Sagan's proof \cite{sagan}, or probably in a more intuitive and less technical way, using maps $s$ and $e$ as guiding lights. We leave this as an excercise for the (very determined) reader. Such proofs might also work for the (hypothetical) complementary formulas.

\subsection{} It was mentioned in \cite[\S 4]{konva} that variants of the branching rule give new recursions for $f_\lambda$, see \cite[Corollary 5]{konva}. This is, of course, also true for the shifted case. For example, the second equality of Theorem \ref{vars} has the following corollary:
$$\lambda_1 f^*_\lambda = \sum_{\s c = (r,s) \in \p C^*[\lambda]} n \left( {\textstyle \frac 1{\lambda_1 + r - s} + \frac 1{\lambda-r + s + 1} - \frac 1{(\lambda_1 + r - s)(\lambda-r + s + 1)}}\right) f^*_{\lambda-\s c}.$$
Again, it would be interesting to know if this identity has a combinatorial meaning.

\bigskip

{\bf Acknowledgements}

\end{document}